\documentclass[12pt]{amsart}
\usepackage[mathscr]{eucal}
\usepackage{amsmath}
%\usepackage{amssymb}
%\usepackage{latexsym}
%\usepackage[dvipdfm]{graphicx}
%\usepackage{amscd}
%\usepackage{bm}

%\usepackage{color}
%\usepackage{lineno}
%\usepackage{refcheck}
%\usepackage{showkeys}
%\usepackage{setspace}
%\doublespacing
%\linenumbers

\theoremstyle{plain}
	\newtheorem{theorem}{Theorem}[section]
	\newtheorem{lemma}[theorem]{Lemma}
	
	\newtheorem{definition}[theorem]{Definition}
	\newtheorem{proposition}[theorem]{Proposition}
	\newtheorem{remark}[theorem]{Remark}

\theoremstyle{plain}

\def\RN{\mathbb{R}^N}
\def\calF{\mathcal{F}}

\def\e{\varepsilon}
\def\bu{\bar{u}}

\makeatletter
\newcommand{\xRightarrow}[2][]{%
\ext@arrow 0055{\Rightarrowfill@}{#1}{#2}%
}
\def\Rightarrowfill@{\arrowfill@\Relbar\Relbar\Rightarrow}
\newcommand{\xLeftarrow}[2][]{%
\ext@arrow 0055{\Leftarrowfill@}{#1}{#2}%
}
\def\Leftarrowfill@{\arrowfill@\Leftarrow\Relbar\Relbar}
\newcommand{\xLongleftrightarrow}[2][]{%
\ext@arrow 0055{\llrafill@}{#1}{#2}%
}
\def\llrafill@{\arrowfill@\Leftarrow\Relbar\Rightarrow}
\makeatother

% **********************************************************
% Setting
% **********************************************************

% Wide
\setlength{\topmargin}{-0.5cm}
\setlength{\oddsidemargin}{0cm}
\setlength{\evensidemargin}{0cm}
\setlength{\textheight}{23cm}
\setlength{\textwidth}{16.5cm}

\makeatletter
 \@addtoreset{equation}{section}
\makeatother

\begin{document}

% **********************************************************
% Title of This Paper
% **********************************************************

\title[Semilinear heat equation]{A doubly critical\\ semilinear heat equation\\
in the $L^1$ space}

\author{Yasuhito Miyamoto}
\address{Graduate School of Mathematical Sciences, The University of Tokyo,
3-8-1 Komaba Meguro-ku Tokyo 153-8914, Japan}
\email{miyamoto@ms.u-tokyo.ac.jp}

\begin{abstract}
We study the existence and nonexistence of a Cauchy problem of the semilinear heat equation
\[
\begin{cases}
\partial_tu=\Delta u+|u|^{p-1}u & \textrm{in}\ \RN\times(0,T),\\
u(x,0)=\phi(x) & \textrm{in}\ \RN
\end{cases}
\]
in $L^1(\RN)$.
Here, $N\ge 1$, $p=1+2/N$ and $\phi\in L^1(\RN)$ is a possibly sign-changing initial function.
Since $N(p-1)/2=1$, the $L^1$ space is scale critical and this problem is known as a doubly critical case.
It is known that a solution does not necessarily exist for every $\phi\in L^1(\RN)$.
Let $X_q:=\{\phi\in L^1_{\rm{loc}}(\RN)\ |\ \int_{\RN}|\phi|\left[\log (e+|\phi|)\right]^qdx<\infty\}(\subset L^1(\RN))$.
In this paper we construct a local-in-time mild solution in $L^1(\RN)$ for $\phi\in X_q$ if $q\ge N/2$.
We show that, for each $0\le q<N/2$, there is a nonnegative initial function $\phi_0\in X_q$ such that the problem has no nonnegative solution, using a necessary condition given by Baras-Pierre [Ann. Inst. H. Poincar\'{e} Anal. Non Lin\'{e}aire {\bf 2} (1985), 185--212].
Since $X_q\subset X_{N/2}$ ($q\ge N/2$), $X_{N/2}$ becomes a sharp integrability condition.
We also prove a uniqueness in a certain set of functions which guarantees the uniqueness of the solution constructed by our method.
\end{abstract}
\date{\today}
\subjclass[2010]{Primary: 35K55; Secondary: 35A01, 46E30}
\keywords{Local-in-time solution, Fujita exponent, Supersolution, Optimal singularity}
\maketitle

%%%%%%%%%%%%%%%%%%%%%%%%%%%%%%%%%%%%%
%%%%%%%%%%%%%%%%%%%%%%%%%%%%%%%%%%%%%
%%%%%%%%%%%%%%%%%%%%%%%%%%%%%%%%%%%%%
% Section 1
%%%%%%%%%%%%%%%%%%%%%%%%%%%%%%%%%%%%%
%%%%%%%%%%%%%%%%%%%%%%%%%%%%%%%%%%%%%
%%%%%%%%%%%%%%%%%%%%%%%%%%%%%%%%%%%%%
\section{Introduction and main results}
We consider the existence and nonexistence of a Cauchy problem of the semilinear heat equation
\begin{equation}\label{S1E1}
\begin{cases}
\partial_tu=\Delta u+|u|^{p-1}u & \textrm{in}\ \RN\times (0,T),\\
u(x,0)=\phi(x) & \textrm{in}\ \RN,
\end{cases}
\end{equation}
where $N\ge 1$, $p=1+2/N$ and $\phi$ is a possibly sign-changing initial function.
When $\phi\in L^{\infty}(\RN)$, one can easily construct a solution by using a fixed point argument.
When $\phi\not\in L^{\infty}(\RN)$, the solvability depends on the balance between the strength of the singularity of $\phi$ and the growth rate of the nonlinearity.
Weissler~\cite{W80} studied the solvability of (\ref{S1E1}), and obtained the following:
\begin{proposition}\label{S1P1}
Let $q_c:=N(p-1)/2$.
Then the following (i) and (ii) hold:\\
(i)(Existence, subcritical and critical cases) Assume either both $q>q_c$ and $q\ge 1$ or $q=q_c>1$. The problem (\ref{S1E1}) has a local-in-time solution for $\phi\in L^q(\RN)$.\\
(ii)(Nonexistence, supercritical case) For each $1\le q<q_c$, there is $\phi\in L^q(\RN)$ such that (\ref{S1E1}) has no local-in-time nonnegative solution.
\end{proposition}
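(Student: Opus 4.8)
The plan is to prove the two assertions by entirely different mechanisms: a contraction-mapping argument built on the smoothing of the heat semigroup for the existence statement \textit{(i)}, and a monotone Picard iteration driven by a power-type singular datum for the nonexistence statement \textit{(ii)}. For \textit{(i)} I would pass to the mild (Duhamel) formulation
\[
u(t)=e^{t\Delta}\phi+\int_0^t e^{(t-s)\Delta}\bigl(|u(s)|^{p-1}u(s)\bigr)\,ds,
\]
and solve it by Banach's fixed point theorem in a time-weighted space adapted to the scaling. I fix an auxiliary exponent $r$, set $\alpha=\tfrac N2(\tfrac1q-\tfrac1r)$, and work in $E_T=\{u:\ \|u\|_{E_T}:=\sup_{0<t\le T}t^{\alpha}\|u(t)\|_{L^r(\RN)}<\infty\}$, intersected with $C([0,T];L^q(\RN))$ so that the initial datum is attained. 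The only tools are the estimate $\|e^{t\Delta}f\|_{L^r}\le C\,t^{-\alpha}\|f\|_{L^q}$ and, for the nonlinear term, the identity $\||u|^{p-1}u\|_{L^{r/p}}=\|u\|_{L^r}^{p}$ followed by the $L^{r/p}$–$L^r$ estimate; the time integral then reduces to a Beta integral $\int_0^t(t-s)^{-a}s^{-\alpha p}\,ds$ with $a=\tfrac{N(p-1)}{2r}$, which converges precisely when $a<1$ and $\alpha p<1$. I would choose $r$ in the window $r>\max\{q_c,q\}$ with $r\ge p$, so that $a<1$ holds automatically and, after a short computation, $\alpha p<1$ as well.

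The subcritical/critical dichotomy is where the hypotheses enter. Carrying the Beta estimate through the weighted norm produces a spare factor $t^{\delta}$ with $\delta=1-q_c/q$. When $q>q_c$ one has $\delta>0$, so the nonlinear map contracts a small ball of $E_T$ once $T$ is taken small, for arbitrary data. When $q=q_c$ one has $\delta=0$ and shrinking $T$ alone gives nothing; instead I would use that $\eta(T):=\sup_{0<t\le T}t^{\alpha}\|e^{t\Delta}\phi\|_{L^r}\to 0$ as $T\to 0$ (valid for every $\phi\in L^q$ with $q<\infty$, by density of bounded compactly supported functions together with $\alpha>0$), which, combined with a small ball radius, again closes the contraction. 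The delicate point I expect is verifying $\alpha p<1$ in this critical case: optimizing over $r$ shows $\alpha p<1$ is attainable exactly when $q_c>1$, whereas at $q_c=1$ one is forced to $\alpha p=1$ and the Beta integral diverges logarithmically. This is precisely the hypothesis $q=q_c>1$, and the borderline $q_c=1$ is the genuinely harder regime that motivates the present paper.

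For \textit{(ii)} I would take a homogeneous singular datum $\phi(x)=|x|^{-b}$, multiplied by a cutoff equal to $1$ near the origin, with the exponent in the window $\tfrac{2}{p-1}<b<\tfrac Nq$; this window is nonempty exactly because $q<q_c=\tfrac{N(p-1)}2$, and the upper bound guarantees $\phi\in L^q(\RN)$. Assuming a nonnegative solution $u$ existed, nonnegativity of the nonlinearity gives $u(t)\ge e^{t\Delta}\phi$, and by induction along the Picard scheme $u_{k+1}=e^{t\Delta}\phi+\int_0^te^{(t-s)\Delta}u_k^{\,p}\,ds$ (with $u_0=e^{t\Delta}\phi$) one obtains the domination $u\ge u_k$ for every $k$. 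The heart of the argument is a power count at the origin: since $e^{t\Delta}\phi$ is parabolically self-similar with $u_0(t,0)\sim t^{-b/2}$ and $u_0(t,y)\ge c\,t^{-b/2}$ for $|y|\le\sqrt t$, a direct lower bound yields $u_1(t,0)\ge c\,t^{-N/2}\int_0^{t/2}s^{(N-bp)/2}\,ds\sim t^{\,1-bp/2}$, so that $u_1(t,0)/u_0(t,0)\sim t^{\,1-b(p-1)/2}\to\infty$ as $t\to 0$ precisely when $b>\tfrac{2}{p-1}$. Thus each iterate dominates the previous one near $t=0$, the monotone limit is identically $+\infty$ on a neighborhood of the origin, and this contradicts $u(t)\in L^1_{\mathrm{loc}}(\RN)$. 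The main obstacle I anticipate is making this iterative blow-up rigorous uniformly on a set of positive measure rather than only at the single point $x=0$, and confirming that the self-similar exponent $2/(p-1)$, not the weaker first-iterate threshold $(N+2)/p$, is the true cut-off; the cleanest packaging is to record the power count as the failure of the necessary condition $\sup_{0<t<T}t^{1/(p-1)}\|e^{t\Delta}\phi\|_{L^\infty}<\infty$, which any nonnegative solution must obey.
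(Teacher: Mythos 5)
The paper offers no proof of this proposition: it is quoted as known and attributed to Weissler \cite{W80} (see also \cite{BC96}). Your reconstruction is essentially the standard argument from those references, and it is correct in outline. For (i), the bookkeeping checks out: with $a=\tfrac{N(p-1)}{2r}=q_c/r$ and $\alpha=\tfrac N2(\tfrac1q-\tfrac1r)$ one indeed has $1-a-\alpha(p-1)=1-q_c/q$, so the spare power $t^{\delta}$ appears exactly when $q>q_c$, and in the critical case the smallness of $\eta(T)$ (the paper's Proposition~\ref{S2P1}~(ii), i.e.\ \cite[Lemma~8]{BC96}) closes the contraction; your diagnosis that the admissible window for $r$ is $\max\{q_c,p\}\le r<pq_c$, which is nonempty precisely when $q_c>1$ and collapses to the divergent borderline $\alpha p=1$ when $q_c=1$, is the right explanation of why the doubly critical case escapes this method. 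For (ii), the datum $|x|^{-b}$ with $\tfrac2{p-1}<b<\tfrac Nq$ is exactly Weissler's example; rather than pushing the Picard iteration pointwise (which, as you note, is delicate to make rigorous on a set of positive measure), you should simply prove the necessary condition you invoke at the end: for $u\ge0$ satisfying the integral equation, $h(s):=\bigl[S(t-s)u(s)\bigr](x)$ is, by Jensen's inequality, a supersolution of $h'=h^p$ on $[0,t]$ with finite terminal value, whence $S(t)\phi(x)\le\bigl((p-1)t\bigr)^{-1/(p-1)}$; since $S(t)\phi(0)\ge ct^{-b/2}$ and $b/2>1/(p-1)$, this yields the contradiction in two lines. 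This Jensen/ODE step is the only genuinely missing ingredient in your write-up, and it is the same mechanism (in its refined, averaged form due to Baras--Pierre) that the paper uses for its own nonexistence result in Proposition~\ref{S2P2}.
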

Let $u(x,t)$ be a function such that $u$ satisfies the equation in (\ref{S1E1}).
We consider the scaled function $u_{\lambda}(x,t):=\lambda^{2/(p-1)}u(\lambda x,\lambda^2t)$.
Then, $u_{\lambda}$ also satisfies the same equation.
We can easily see that $\left\|u_{\lambda}(x,0)\right\|_q=\left\|u(x,0)\right\|_q$ if and only if $q=q_c$.
It is well known that $q_c$ is a threshold as Proposition~\ref{S1P1} shows.
%In particular, if $q\ge\max\{q_c,1\}$ except $q=q_c=1$, then (\ref{S1E1}) has a solution.
However, the case $q=q_c=1$, i.e., $p=1+2/N$, 
is not covered by Proposition~\ref{S1P1}, and it is known that there is a nonnegative initial function $\phi\in L^1(\RN)$ such that (\ref{S1E1}) with $p=1+2/N$ has no local-in-time nonnegative solution.
See Brezis-Cazenave~\cite[Theorem~11]{BC96}, Celik-Zhou~\cite[Theorem~4.1]{CZ03} or Laister~{\it et.al.}$\ $\cite[Corollary~4.5]{LRSV16} for nonexistence results.
%See \cite[Corollary~1.2]{HI18}, \cite[Theorem~1]{T16} and references therein for existence and nonexistence results with measures as initial data.
See \cite{BP85,HI18,T16} and references therein for existence and nonexistence results with measures as initial data.
In \cite[Section~7.5]{BC96} the case $p=1+2/N$ is referred to as \lq\lq doubly critical case".
Several open problems were given in \cite{BC96}.
It was mentioned in \cite[p.32]{W81} that (\ref{S1E1}) has a local-in-time solution if $\phi\in L^1(\RN)\cap L^q(\RN)$ for some $q>1$.
However, a solvability condition was not well studied.
See Table~1.
For a detailed history about the existence, nonexistence and uniqueness of (\ref{S1E1}), see \cite[Section~1]{CZ03}.
\begin{table}[t]
\begin{tabular}{|c||c|c|c|c|}
\hline
ranges of $q$ & $1\le q<q_c$ & $1=q=q_c$ & $1<q=q_c$ & $q>q_c$, $q\ge 1$\\
 & supercritical & doubly critical & critical & subcritical\\
\hline
\hline
existence/ &not always & not always & exist & exist \\
nonexistence & exist & exist &  & \\
\hline
 & Prop.~\ref{S1P1}~(ii) & $\qquad\ \,$exist: \cite[p.32]{W81}, & Prop.~\ref{S1P1}~(i) & Prop.~\ref{S1P1}~(i)\\
 & & $\qquad\qquad$Thm.~\ref{A}~(i) & & \\
 & & not exist: \cite{BC96,CZ03,LRSV16}, & & \\
 & & $\qquad\qquad\ $Thm.~\ref{A}~(ii) & & \\
\hline
\end{tabular}
\caption{Existence and nonexistence of a local-in-time solution of (\ref{S1E1}) in $L^q(\RN)$.}
\end{table}
\indent
In this paper we obtain a sharp integrability condition on $\phi\in L^1(\RN)$ which determines the existence and nonexistence of a local-in-time solution in the case $p=1+2/N$.
We also show that a solution constructed in Theorem~\ref{A} is unique in a certain set of functions.
Throughout the present paper we define $f(u):=|u|^{p-1}u$.
Let $L^q(\RN)$, $1\le q\le\infty$, denote the usual Lebesgue space on $\RN$ equipped with the norm $\left\|\,\cdot\,\right\|_q$.
For $\phi\in L^1(\RN)$, we define
\[
S(t)[\phi](x):=\int_{\RN}G_t(x-y)\phi(y)dy,
\]
where $G_t(x-y):=(4\pi t)^{-{N}/{2}}\exp\left(-\frac{|x-y|^2}{4t}\right)$.
The function $S(t)[\phi]$ is a solution of the linear heat equation with initial function $\phi$.
We give a definition of a solution of (\ref{S1E1}).
\begin{definition}\label{S1D1}
Let $u$ and $\bu$ be measurable functions on $\RN\times (0,T)$.\\
(i)({Integral solution}) We call $u$ an integral solution of (\ref{S1E1}) if there is $T>0$ such that $u$ satisfies the integral equation
\begin{equation}\label{S1D1E1}
u(t)=\calF[u](t)\ \ \textrm{a.e.}\ x\in\RN,\ \ 0<t<T,\ \ \textrm{and}\ \ \left\|u(t)\right\|_{\infty}<\infty\ \textrm{for}\ 0<t<T,
\end{equation}
where
\[
\calF[u](t):=S(t)\phi+\int_0^tS(t-s)f(u(s))ds.
\]
(ii)({Mild solution}) We call $u$ a mild solution if $u$ is an integral solution and $u(t)\in C([0,T),L^1(\RN))$.\\
%\cap L^{\infty}_{\rm{loc}}((0,T),L^{\infty}(\RN))$.\\
(iii) We call $\bu$ a supersolution of (\ref{S1E1}) if $\bu$ satisfies the integral inequality $\calF[\bu](t)\le\bu(t)<\infty$ for a.e. $x\in\RN$, $0<t<T$.
\end{definition}
For $0\le q<\infty$, we define a set of functions by
\[
X_q:=\left\{\phi(x)\in L^1_{\rm{loc}}(\RN)\ \left|\ \int_{\RN}|\phi|\left[\log(e+|\phi|)\right]^qdx<\infty\right.\right\}.
\]
It is clear that $X_q\subset L^1(\RN)$ and that $X_{q_1}\subset X_{q_2}$ if $q_1\ge q_2$.
The main theorem of the paper is the following:
\begin{theorem}\label{A}
Let $N\ge 1$ and $p=1+2/N$.
Then the following (i) and (ii) hold:\\
(i)(Existence) If $\phi\in X_q$ for some $q\ge N/2$, then (\ref{S1E1}) has a local-in-time mild solution $u(t)$, and this mild solution satisfies the following:
\begin{equation}\label{AE0}
\textrm{there is $C>0$ such that $\left\|u(t)\right\|_{\infty}\le Ct^{-\frac{N}{2}}(-\log t)^{-q}$ for small $t>0$}.
\end{equation}
In particular, (\ref{S1E1}) has a local-in-time mild solution for every $\phi\in X_{N/2}$.\\
(ii)(Nonexistence) For each $0\le q<N/2$, there is a nonnegative initial function $\phi_0\in X_q$, which is explicitly given by (\ref{S4E1}), such that (\ref{S1E1}) has no local-in-time nonnegative integral solution, and hence (\ref{S1E1}) has no local-in-time nonnegative mild solution.
\end{theorem}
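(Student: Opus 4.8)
The plan is to treat existence and nonexistence separately, the common engine being a refined smoothing estimate that upgrades the trivial bound $\|S(t)\phi\|_{\infty}\le(4\pi t)^{-N/2}\|\phi\|_1$ to $\|S(t)\phi\|_{\infty}\le C\,t^{-N/2}(-\log t)^{-q}$ for $\phi\in X_q$ and small $t$. I would prove this first as a lemma by a level--set splitting $\phi=\phi\,\mathbf{1}_{\{|\phi|\le M\}}+\phi\,\mathbf{1}_{\{|\phi|>M\}}$: the bounded part contributes $M$ while the singular part contributes $(4\pi t)^{-N/2}\int_{\{|\phi|>M\}}|\phi|\,dx\le(4\pi t)^{-N/2}[\log(e+M)]^{-q}\int_{\RN}|\phi|[\log(e+|\phi|)]^q\,dx$, and the choice $M=t^{-N/2}(-\log t)^{-q}$ balances the two. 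This single estimate simultaneously produces (\ref{AE0}) and pins down the nonexistence threshold.

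For (i) I would majorize the solution by the monotone iteration $U_0:=S(t)|\phi|$ and $U_{n+1}(t):=S(t)|\phi|+\int_0^tS(t-s)U_n(s)^p\,ds$, and prove by induction the uniform bound $\|U_n(t)\|_{\infty}\le A\,t^{-N/2}(-\log t)^{-q}$ on a short interval. The induction step is the heart of the matter: estimating $\|S(t-s)U_n(s)^p\|_{\infty}\le\min\{\|U_n(s)\|_{\infty}^p,(4\pi(t-s))^{-N/2}\|U_n(s)\|_p^p\}$, using the smoothing kernel for $s<t/2$ and the contraction bound $\|S(t-s)U_n(s)^p\|_{\infty}\le\|U_n(s)\|_{\infty}^p$ for $s>t/2$, the Duhamel term reduces to $t^{-N/2}\int_0^{t/2}s^{-1}(-\log s)^{-qp}\,ds\le C\,t^{-N/2}(-\log t)^{-(qp-1)}$ (plus a comparable term from $s>t/2$). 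Since $p=1+2/N$ gives $qp-1\ge q$ exactly when $q\ge N/2$, the bound closes precisely in the stated range. The monotone limit $U_\infty=\lim_nU_n$ is a supersolution, and running the same contraction for the signed equation $u=\calF[u]$ inside the order interval $[-U_\infty,U_\infty]$ yields an integral solution with $\|u(t)\|_{\infty}\le\|U_\infty(t)\|_{\infty}$, which is (\ref{AE0}); continuity $u\in C([0,T),L^1(\RN))$ then follows from dominated convergence and the strong $L^1$-continuity of $\{S(t)\}$.

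The main obstacle in (i) is that the convexity bound $\|U_n(s)\|_p^p\le\|U_n(s)\|_1\,\|U_n(s)\|_{\infty}^{p-1}$ wastes logarithmic decay, producing only the exponent $q(p-1)=2q/N$ in place of $qp$, so at the boundary $q=N/2$ the naive estimate breaks down. One must instead propagate the sharp profile $\|U_n(s)\|_p^p\le B\,s^{-1}(-\log s)^{-qp}$ through the iteration, i.e. run the induction in the two weighted quantities $\sup_t t^{N/2}(-\log t)^{q}\|u(t)\|_{\infty}$ and $\sup_t t(-\log t)^{qp}\|u(t)\|_p^p$ together, and then handle $q=N/2$, where the time integral is exactly critical, by a careful choice of $T$ and of the constant $A$.

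For (ii) I would exhibit the explicit radial datum $\phi_0$ of (\ref{S4E1}) with $\phi_0(x)\sim|x|^{-N}[\log(1/|x|)]^{-s_0}$ near the origin and compact support, where $q+1<s_0<N/2+1$; then $\int_{\RN}\phi_0[\log(e+\phi_0)]^r\,dx$ converges iff $s_0>r+1$, so $\phi_0\in X_q\setminus X_{N/2}$, while $S(s)\phi_0(0)\sim s^{-N/2}(-\log s)^{-(s_0-1)}$. Nonexistence is then the Baras--Pierre necessary condition in action: any nonnegative integral solution $u$ satisfies $u\ge v_k$ for the increasing iterates $v_0=S(t)\phi_0$, $v_{k+1}=S(t)\phi_0+\int_0^tS(t-s)v_k(s)^p\,ds$, and the logarithmic bookkeeping shows that along the origin $v_k(0,t)\sim t^{-N/2}(-\log t)^{-\beta_k}$ with $\beta_0=s_0-1$ and $\beta_{k+1}=p\beta_k-1$. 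Since $\beta\mapsto p\beta-1$ has the repelling fixed point $1/(p-1)=N/2$ and $\beta_0<N/2$, the exponents decrease and $\beta_k\to-\infty$, so for each fixed small $t$ one has $v_k(0,t)\to\infty$ as $k\to\infty$, contradicting $u(0,t)<\infty$. The delicate point is precisely this bookkeeping: verifying that the recursion $\beta_{k+1}=p\beta_k-1$ is genuinely driven by the peak contribution at every stage, so that the Baras--Pierre criterion fails for all $s_0<N/2+1$ and not merely at the weaker first--iterate threshold $s_0<1+1/p$.
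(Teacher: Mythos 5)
Your overall architecture (a nonnegative majorant for the datum $|\phi|$, an order-interval argument for signed data, and an explicit borderline datum tested against a necessary condition for nonexistence) matches the paper's, and your level-set splitting is a perfectly good substitute for the paper's Jensen-inequality derivation of $\|S(t)\phi\|_{\infty}\le Ct^{-N/2}(-\log t)^{-q}$. But two steps have real gaps, both concentrated at the critical exponent $q=N/2$, which is the main case of the theorem. First, your passage from the majorant to a signed solution ``by running the same contraction inside the order interval $[-U_\infty,U_\infty]$'' fails at $q=N/2$: the Lipschitz estimate produces $\int_0^t s^{-N(p-1)/2}(-\log s)^{-(p-1)q}\,ds=\int_0^t s^{-1}(-\log s)^{-2q/N}\,ds$, which diverges when $2q/N=1$. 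This is precisely why the paper's uniqueness result (Theorem~\ref{B}) is stated only for $q>N/2$. The paper instead sandwiches the (non-monotone, non-contractive) iterates $u_n=\calF[u_{n-1}]$, $u_0=0$, between $\pm w$ and extracts a locally uniformly convergent subsequence by parabolic regularity, Ascoli--Arzel\`{a} and a diagonal argument, then passes to the limit by dominated convergence. You need some such compactness argument; a contraction will not close at $q=N/2$.

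Second, the induction that closes the supersolution bound. You correctly identify that interpolating $\|U_n\|_p^p\le\|U_n\|_1\|U_n\|_{\infty}^{p-1}$ loses the logarithm (exponent $q(p-1)=2q/N$ instead of $qp$), but the repair you propose --- propagating $\|U_n(s)\|_p^p\le Bs^{-1}(-\log s)^{-qp}$ as a second induction hypothesis --- is only asserted, and at $q=N/2$ the time integral $\int_0^t s^{-1}(-\log s)^{-qp}\,ds\sim(-\log t)^{1-qp}$ exactly cancels the prefactor $(-\log t)^{q}$, so there is no room to absorb constants; this is where the proof actually lives. The paper's device is to write the majorant in closed form, $\bu(t)=2g^{-1}(S(t)g(\phi))$ with $g(u)=u[\log(\rho+u)]^q$ convex: then $f(\bu(s))$ factors as $S(s)g(\phi)$ times the bounded quantity $g^{-1}(S(s)g(\phi))^p/S(s)g(\phi)$, the semigroup property reproduces $S(t)g(\phi)$ under $S(t-s)$, and the needed smallness comes from the fact that the constant $C_0$ in $\|S(t)g(\phi)\|_{\infty}\le C_0t^{-N/2}$ can be taken arbitrarily small. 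Your two-norm induction may well be executable, but as written it stops at the hard point. Finally, for (ii): your plan to show $v_k(0,t)\to\infty$ via the exponent recursion $\beta_{k+1}=p\beta_k-1$ amounts to reproving the Baras--Pierre necessary condition from scratch, and the ``delicate bookkeeping'' you defer is the entire content of that result. The paper simply quotes the condition $\int_{B(\tau)}\phi\,dx\le\gamma_0|\log\tau|^{-N/2}$ and checks in two lines that $\phi_0$ violates it, since $\int_{B(\tau)}\phi_0\,dx\sim(-\log\tau)^{-N/2+\e}$. If you intend to avoid citing Baras--Pierre, the iteration lower bounds must actually be carried out; if you are willing to cite it, the recursion is unnecessary.
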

\begin{remark}
(i) The function $\phi$ in Theorem~\ref{A}~(i) is not necessarily nonnegative.\\
(ii) Theorem~\ref{A} indicates that $X_{N/2}(\subset L^1(\RN))$ is an optimal set of initial functions for the case $p=1+2/N$, and $X_{N/2}$ is slightly smaller than $L^1(\RN)$.
This situation is different from the case $p>1+2/N$, since (\ref{S1E1}) is always solvable in the scale critical space $L^{N(p-1)/2}$ for $p>1+2/N$ (Proposition~\ref{S1E1}~(i)).\\
(iii) $L^1(\RN)$ is larger than the optimal set for $p=1+2/N$.
On the other hand, it follows from Proposition~\ref{S1P1}~(i) that if $1<p<1+2/N$, then (\ref{S1E1}) has a solution for all $\phi\in L^1(\RN)$.\\
Therefore, $L^1(\RN)$ is small enough for the case $1<p<1+2/N$.\\
(iv) The function $\phi_0$ given in Theorem~\ref{A}~(ii) is modified from $\psi(x)$ given by (\ref{S1E5}).
This function comes from Baras-Pierre~\cite{BP85}, and Theorem~\ref{A}~(ii) is a rather easy consequence of \cite[Proposition~3.2]{BP85}.
However, we include Theorem~\ref{A}~(ii) for a complete description of the borderline property of $X_{N/2}$.\\
(v) Laister {\it et.al.}~\cite{LRSV16} obtained a necessary and sufficient condition for the existence of a local-in-time nonnegative solution of
\begin{equation}\label{S1E2-}
\begin{cases}
\partial_tu=\Delta u+h(u) & \textrm{in}\ \RN\times(0,T),\\
u(x,0)=\phi(x)\ge 0 & \textrm{in}\ \RN.
\end{cases}
\end{equation}
They showed that when $h(u)=u^{1+2/N}[\log(e+u)]^{-r}$, (\ref{S1E2-}) has a local-in-time nonnegative solution for every nonnegative $\phi\in L^1(\RN)$ if $1<r<\lambda p$, and (\ref{S1E2-}) does not always have if $0\le r\le1$.
Here, $\lambda>0$ is a certain constant.
Therefore, the optimal growth of $h(u)$ for $L^1(\RN)$ is slightly smaller than $u^{1+2/N}$.\\
(vi) The exponent $p=1+2/N$, which is called Fujita exponent, also plays a key role in the study of global-in-time solutions.
If $1<p\le 1+2/N$, then every nontrivial nonnegative solution of (\ref{S1E1}) blows up in a finite time.
If $p>1+2/N$, then (\ref{S1E1}) has a global-in-time nonnegative solution.
See Fujita~\cite{F66}.
In particular, in the case $p=1+2/N$ we cannot expect a global existence of a classical solution for small initial data.
\end{remark}
The next theorem is about the uniqueness of the integral solution in a certain class.
\begin{theorem}\label{B}
Let $N\ge 1$, $p=1+2/N$ and $q>N/2$.
Then an integral solution $u(t)$ of (\ref{S1E1}) is unique in the set
\begin{equation}\label{BE0}
\left\{u(t)\in L^1(\RN)\ \left|\ \sup_{0\le t\le T}t^{N/2}(-\log t)^q\left\|u(t)\right\|_{\infty}<\infty\right.\right\}.
\end{equation}
Therefore, a solution given by Theorem~\ref{A} is unique.
\end{theorem}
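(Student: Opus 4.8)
The plan is to compare the two solutions in $L^1(\RN)$ and to exploit that the sharp decay rate built into (\ref{BE0}) renders the resulting time-singular kernel integrable at $t=0$ exactly when $q>N/2$. We may assume $T\in(0,1)$. Let $u_1,u_2$ be integral solutions of (\ref{S1E1}) with the same initial datum $\phi$, both lying in the set (\ref{BE0}), and set $w:=u_1-u_2$ and $M:=\sup_{0<t<T}t^{N/2}(-\log t)^q\left(\|u_1(t)\|_\infty+\|u_2(t)\|_\infty\right)<\infty$. Subtracting $u_i(t)=\calF[u_i](t)$ gives
\[
w(t)=\int_0^tS(t-s)\left[f(u_1(s))-f(u_2(s))\right]ds\qquad\textrm{a.e. }x\in\RN,\ 0<t<T.
\]
Since $f(u)=|u|^{p-1}u$ is $C^1$ (as $p-1=2/N>0$), the mean value theorem gives the pointwise bound $|f(u_1)-f(u_2)|\le p(|u_1|^{p-1}+|u_2|^{p-1})|w|$.

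The first step converts the $L^\infty$-decay in (\ref{BE0}) into a time weight on the nonlinearity. Because $p-1=2/N$, one has $\tfrac N2(p-1)=1$, so the bound $\|u_i(s)\|_\infty\le Ms^{-N/2}(-\log s)^{-q}$ yields $\|u_i(s)\|_\infty^{p-1}\le M^{p-1}k(s)$ with $k(s):=s^{-1}(-\log s)^{-2q/N}$. Combining this with the pointwise Lipschitz bound and the $L^1$-contractivity $\|S(t-s)g\|_1\le\|g\|_1$ of the heat semigroup produces the closed integral inequality
\[
\left\|w(t)\right\|_1\le C\int_0^tk(s)\left\|w(s)\right\|_1\,ds,\qquad C:=2pM^{p-1}.
\]
The decisive observation is that $\int_0^\delta k(s)\,ds<\infty$ precisely when $2q/N>1$: the substitution $\sigma=-\log s$ turns it into $\int^\infty\sigma^{-2q/N}\,d\sigma$, which converges if and only if $q>N/2$. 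This is exactly where the hypothesis is used and why the borderline value $q=N/2$ must be excluded.

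With an integrable kernel in hand, the conclusion follows from a singular Gronwall argument. Setting $\Phi(t):=\int_0^tk(s)\|w(s)\|_1\,ds$, the inequality reads $\|w(t)\|_1\le C\Phi(t)$, whence $\Phi'(t)=k(t)\|w(t)\|_1\le Ck(t)\Phi(t)$ for a.e. $t$. Multiplying by the integrating factor $\exp(-C\int_0^tk)$ shows that $\Phi(t)\exp(-C\int_0^tk)$ is nonincreasing; since $\Phi(0^+)=0$, this forces $\Phi\equiv0$, hence $\|w(t)\|_1\equiv0$ and $u_1=u_2$.

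The main obstacle is purely to justify running this argument down to $t=0$, that is, to ensure $s\mapsto k(s)\|w(s)\|_1$ is integrable on $(0,t)$ and that $\Phi(0^+)=0$. This rests on knowing $\int_0^t\|f(u_i(s))\|_1\,ds<\infty$ for $i=1,2$, i.e. that the Duhamel term is an absolutely convergent $L^1$-integral, which for nonnegative solutions is immediate from Tonelli's theorem together with $\|u_i(t)-S(t)\phi\|_1<\infty$, and which otherwise belongs to the well-posed notion of an $L^1$-valued integral solution. Granting this, the elementary bound $|f(u_1)-f(u_2)|\le C'(|u_1|^p+|u_2|^p)$ gives $\|w(s)\|_1\le C'\int_0^s(\|f(u_1(r))\|_1+\|f(u_2(r))\|_1)\,dr$, which is nondecreasing and tends to $0$ as $s\to0^+$; hence it is bounded on $(0,t)$ and $\int_0^tk(s)\|w(s)\|_1\,ds\le\big(\sup_{0<s<t}\|w(s)\|_1\big)\int_0^tk(s)\,ds<\infty$, which legitimizes the estimate of the second paragraph and completes the proof.
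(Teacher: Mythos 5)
Your argument is correct and is essentially the paper's: both reduce uniqueness to the estimate $\|u_1(t)-u_2(t)\|_1\le C\int_0^t s^{-1}(-\log s)^{-2q/N}(\cdots)\,ds$ obtained from the Lipschitz bound $|f(u_1)-f(u_2)|\le p(|u_1|^{p-1}+|u_2|^{p-1})|u_1-u_2|$, the decay $\|u_i(s)\|_\infty\le Ms^{-N/2}(-\log s)^{-q}$ from (\ref{BE0}) combined with $N(p-1)/2=1$, and $\|G_{t-s}\|_1=1$, and both hinge on exactly the same point, namely that $s^{-1}(-\log s)^{-2q/N}$ is integrable near $0$ precisely because $q>N/2$. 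The only differences are in how the estimate is closed: the paper takes $T$ small enough that the constant multiplying $\sup_{0\le s\le T}\|u_1(s)-u_2(s)\|_1$ is below $1/2$ and absorbs it, whereas you run a Gronwall integrating-factor argument and, commendably, make explicit why $\sup_{0<s<t}\|w(s)\|_1<\infty$ and $\Phi(0^+)=0$ -- a finiteness point the paper leaves implicit.
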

\begin{remark}
(i) If there were a solution that does not satisfy (\ref{BE0}), then the uniqueness fails.
However, it seems to be an open problem.\\
(ii) In the case $q=N/2$ the uniqueness under (\ref{BE0}) is left open.\\
(iii) For general $p$ and $q$, the uniqueness of a solution of (\ref{S1E1}) is known in the set
\[
\left\{u(t)\in L^q(\RN)\ \left|\ \sup_{0\le t\le T}t^{\frac{N}{2}\left(\frac{1}{q}-\frac{1}{pq}\right)}\left\|u(t)\right\|_{pq}<\infty\right.\right\}.
\]
See Haraux-Weissler~\cite{HW82} and \cite{W80}.
For an unconditional uniqueness with a certain range of $p$ and $q$, see \cite[Theorem~4]{BC96}.\\
(iv) The nonuniqueness in $L^q(\RN)$ is also known for (\ref{S1E1}).
For $p>1+2/N$ and $1\le q<N(p-1)/2<p+1$, see \cite{HW82}.
For $p=q=N/(N-2)$, see Ni-Sacks~\cite{NS85} and Terraneo~\cite{T02}.
\end{remark}

Let us mention technical details.
We assume that $\phi\in X_q$ for some $q\ge N/2$.
Using a monotone method, we construct a nonnegative mild solution $w(t)$ of
\begin{equation}\label{S1E2}
\begin{cases}
\partial_tw=\Delta w+f(w) & \textrm{in}\ \RN\times (0,T),\\
w(x,0)=|\phi(x)| & \textrm{in}\ \RN.
\end{cases}
\end{equation}
We define $g(u)$ by
\begin{equation}\label{g}
g(u):=u\left[\log(\rho+|u|)\right]^q,
\end{equation}
where $\rho>1$ is chosen appropriately.
We will see that if $\rho\ge e$, then $g(u)$ is convex for $u\ge 0$ and $g$ plays a crucial role in the construction of the solution of (\ref{S1E2}).
In order to construct a nonnegative solution we use a method developed by Robinson-Sier\.{z}\c{e}ga~\cite{RS13} with the convex function $g$, which was also used in Hisa-Ishige~\cite{HI18}.
We define a sequence of functions $(u_n)_{n=0}^{\infty}$ by
\begin{equation}\label{S1E3}
\begin{cases}
u_n(t)=\calF[u_{n-1}](t)\ \textrm{for}\ 0\le t<T & \textrm{if}\ n\ge 1,\\
u_0(t)=0. & {}
\end{cases}
\end{equation}
Then, we show that $-w(t)\le u_n(t)\le w(t)$ for $0\le t<T$.
Since $|u_n(t)|\le w(t)$, we can extract a convergent subsequence in $C_{\rm{loc}}(\RN\times (0,T))$, using a parabolic regularization, the dominated convergence theorem and a diagonal argument.
The limit function becomes a mild solution (\ref{S1E1}).

In the nonexistence part we use a necessary condition for the existence of a nonnegative solution of (\ref{S1E1}) obtained by Baras-Pierre~\cite{BP85}, which is stated in Proposition~\ref{S2P2} in the present paper.
Using their result, one can show that there is $c_0>0$ such that if $\phi(x)\ge c_0\psi(x)$
in a neighborhood of the origin, then (\ref{S1E1}) has no nonnegative integral solution.
Here,
\begin{equation}\label{S1E5}
\psi(x):=|x|^{-N}\left(-\log|x|\right)^{-\frac{N}{2}-1}\ \ \textrm{for}\ \ 0<|x|<1/e.
\end{equation}
See also \cite{HI18}.
For each $0\le q<N/2$ we will see that a modified function $\phi_0$, which is given by (\ref{S4E1}), belongs to $X_q$.
We show that $\phi_0$ does not satisfy the necessary condition for the existence of an integral solution stated in  Proposition~\ref{S2P2}.
Hence, (\ref{S1E1}) with $\phi_0$ has no nonnegative solution for each $0\le q<N/2$.

This paper consists of five sections.
In Section~2 we recall known results including a monotone method, a necessary condition on the existence for (\ref{S1E1}) and $L^p$-$L^q$-estimates.
In Section~3 we prove Theorem~\ref{A}~(i).
In Section~4 we prove Theorem~\ref{A}~(ii).
In Section~5 we prove Theorem~\ref{B}.

\section{Preliminaries}
First we recall the monotonicity method.
\begin{lemma}\label{S2L1}
Let $0<T\le\infty$ and let $f$ be a continuous nondecreasing function such that $f(0)\ge 0$.
The problem (\ref{S1E1}) has a nonnegative integral solution for $0<t<T$ if and only if (\ref{S1E1}) has a nonnegative supersolution for $0<t<T$.
Moreover, if a nonnegative supersolution $\bu(t)$ exists, then the solution $u(t)$ obtained in this lemma satisfies $0\le u(t)\le\bu(t)$.
\end{lemma}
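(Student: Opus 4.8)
The statement is the standard monotone-iteration (supersolution) principle for the integral equation $u=\calF[u]$, so the plan is to prove the two implications separately, with essentially all the work in the \lq\lq if\rq\rq\ direction. The \lq\lq only if\rq\rq\ direction is immediate: if $u$ is a nonnegative integral solution, then $\calF[u](t)=u(t)\le u(t)<\infty$ for a.e.\ $x\in\RN$ and $0<t<T$, so $u$ is itself a nonnegative supersolution in the sense of Definition~\ref{S1D1}~(iii).

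For the converse I would build the solution from the supersolution $\bu$ by the monotone iteration~(\ref{S1E3}). Two structural facts drive the argument. First, the linear propagator $S(t)$ is order preserving, since $G_t>0$: if $v\ge 0$ then $S(t)v\ge 0$, and more generally $S(t-s)\bigl[f(w)-f(v)\bigr]\ge 0$ whenever $f(w)\ge f(v)$. Second, $f$ is nondecreasing. Together these give monotonicity of $\calF$: if $0\le v\le w$ then $f(v)\le f(w)$, the $S(t)\phi$ terms cancel in $\calF[w]-\calF[v]$, and the remaining integrand is nonnegative, so $\calF[v]\le\calF[w]$. With $u_0=0$ and $u_n=\calF[u_{n-1}]$ as in~(\ref{S1E3}), I would show by induction that $u_{n-1}\le u_n\le\bu$ for every $n$. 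For the base step, $u_1=\calF[0]=S(t)\phi+tf(0)\ge 0=u_0$ (using $f(0)\ge 0$ together with $S(t)\phi\ge 0$, which is the relevant setting here, e.g.\ the data $|\phi|$ in~(\ref{S1E2})), while $u_1=\calF[0]\le\calF[\bu]\le\bu$ by monotonicity of $\calF$ and the supersolution inequality. The inductive step is the same computation: $u_{n-1}\le u_n\le\bu$ yields $u_n=\calF[u_{n-1}]\le\calF[u_n]=u_{n+1}$ and $u_{n+1}=\calF[u_n]\le\calF[\bu]\le\bu$.

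Hence $(u_n)$ is pointwise nondecreasing and bounded above by $\bu$, so $u(x,t):=\lim_{n\to\infty}u_n(x,t)$ exists, is measurable, and satisfies $0\le u\le\bu$, which is the final bound asserted in the lemma. The one genuinely analytic step is to pass to the limit inside the nonlinear term. Since $u_{n-1}\uparrow u$ and $f$ is continuous and nondecreasing, $f(u_{n-1})\uparrow f(u)$ pointwise with $0\le f(u_{n-1})\le f(\bu)$, and the supersolution inequality gives $\int_0^t S(t-s)f(\bu)\,ds\le\bu-S(t)\phi<\infty$ for a.e.\ $x$. The monotone convergence theorem, applied with respect to the nonnegative measure $G_{t-s}(x-y)\,dy\,ds$, then yields $\int_0^tS(t-s)f(u_{n-1})\,ds\to\int_0^tS(t-s)f(u)\,ds$, so letting $n\to\infty$ in $u_n=\calF[u_{n-1}]$ produces $u=\calF[u]$ a.e. Finally $\left\|u(t)\right\|_{\infty}\le\left\|\bu(t)\right\|_{\infty}<\infty$ on $(0,T)$ for the (locally in time bounded) supersolutions produced in this paper, so $u$ is a nonnegative integral solution.

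The main obstacle is precisely this interchange of limit and integral, together with the finiteness of the limit $\calF[u]$; both are forced by the domination $f(u_n)\le f(\bu)$ and the supersolution inequality $\int_0^tS(t-s)f(\bu)\,ds\le\bu<\infty$. The positivity of $S(t)$ and the monotonicity of $f$ reduce everything else to the routine induction above, and these hypotheses are exactly what the lemma assumes.
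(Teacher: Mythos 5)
Your proposal is correct and follows essentially the same route as the paper: monotone iteration from below (the paper starts at $u_1=S(t)\phi$ rather than $u_0=0$, an immaterial difference since $f(0)\ge 0$), the inductive sandwich $0\le u_n\le\bu$, and the monotone convergence theorem to pass to the limit in $u_n=\calF[u_{n-1}]$. Your treatment is in fact slightly more explicit than the paper's about the domination $f(u_n)\le f(\bu)$ and the implicit nonnegativity of $S(t)\phi$, but the substance is identical.
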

\begin{proof}
This lemma is well known.
See \cite[Theorem~2.1]{RS13} for details.
However, we briefly show the proof for readers' convenience.

If (\ref{S1E1}) has an integral solution, then the solution is also a supersolution.
Thus, it is enough to show that (\ref{S1E1}) has an integral solution if (\ref{S1E1}) has a supersolution.
Let $\bu$ be a supersolution for $0<t<T$.
Let $u_1=S(t)\phi$.
We define $u_n$, $n=2,3,\ldots$, by
\[
u_n=\calF[u_{n-1}].
\]
Then we can show by induction that
\[
0\le u_1\le u_2\le\cdots\le u_n\le\cdots\le\bu <\infty\ \ \textrm{a.e.}\ x\in\RN,\ 0<t<T.
\]
This indicates that the limit $\lim_{n\to\infty}u_n(x,t)$ which is denoted by $u(x,t)$ exists for almost all $x\in\RN$ and $0<t<T$.
By the monotone convergence theorem we see that
\[
\lim_{n\to\infty}\calF[u_{n-1}]=\calF[u],
\]
and hence $u=\calF[u]$.
Then, $u$ is an integral solution of (\ref{S1E1}).
It is clear that $0\le u(t)\le \bu(t)$.
\end{proof}
Baras-Pierre~\cite{BP85} studied necessary conditions for the existence of an integral solution in the case $p>1$.
See also \cite{HI18} for details of necessary conditions including Proposition~\ref{S2P2}.
The following proposition is a variant of \cite[Proposition~3.2]{BP85}.
\begin{proposition}\label{S2P2}
Let $N\ge 1$ and $p=1+2/N$. 
If $u(t)$ is an nonnegative integral solution, i.e., $u(t)$ satisfies (\ref{S1D1E1}) with a nonnegative initial function $\phi$ and some $T>0$, then there exists a constant $\gamma_0>0$ depending only on $N$ and $p$ such that
\begin{equation}\label{S2P2E0}
\int_{B(\tau)}\phi(x)dx\le\gamma_0|\log\tau|^{-\frac{N}{2}}
\ \ \textrm{for all}\ \ 0<\tau<T,
\end{equation}
where $B(\tau):=\{x\in\RN\ |\ |x|<\tau\}$.
\end{proposition}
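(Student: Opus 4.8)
The plan is to argue by contraposition: I would show that if $M(\tau):=\int_{B(\tau)}\phi\,dx$ is too large for some small $\tau$, then every nonnegative integral solution must have $\|u(t)\|_\infty=\infty$ for some $t\in(0,T)$, contradicting Definition~\ref{S1D1}~(i). Note $M(\tau)$ is nondecreasing in $\tau$, and since $\phi\ge0$ we have $u\ge0$, so $f(u)=u^p$ and the identity $u=\calF[u]$ gives the two pointwise inequalities $u(t)\ge S(t)\phi$ and $u(t)\ge\int_0^tS(t-s)u(s)^p\,ds$. As a seed estimate I would use only $u\ge S(t)\phi$ together with the elementary bound $G_\sigma(x-y)\ge(4\pi\sigma)^{-N/2}e^{-1}$ for $|x|,|y|\le\sqrt\sigma$ to obtain
\[
u(\sigma)(x)\ge c_1\sigma^{-N/2}M(\sqrt\sigma)\qquad(|x|\le\sqrt\sigma),\quad c_1=(4\pi)^{-N/2}e^{-1},
\]
converting concentrated initial mass into a lower bound for $u$ on a shrinking ball centered at the origin.

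The main step is a forward iteration that feeds such a ball-estimate back into the nonlinear term. Given $u(\sigma)(x)\ge a(\sigma)$ on $|x|\le\sqrt\sigma$, restricting the spatial integration to $|z|\le\sqrt\sigma$, the time integration to $\sigma\in[\tau^2,t/2]$, and using $G_{t-\sigma}(x-z)\ge c\,t^{-N/2}$ there, I get for $|x|\le\sqrt t$
\[
u(t)(x)\ge c_*\,t^{-N/2}\int_{\tau^2}^{t/2}\omega_N\sigma^{N/2}a(\sigma)^p\,d\sigma .
\]
Here the criticality $p=1+2/N$ is decisive: $\sigma^{N/2}\sigma^{-pN/2}=\sigma^{-1}$ exactly, so the integral is logarithmic and each pass raises the power of $\log(\sigma/\tau^2)$ by one while raising the amplitude to the $p$-th power. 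Iterating from $a_0(\sigma)=c_1\sigma^{-N/2}m$ with $m:=M(\tau)$ (using $M(\sqrt\sigma)\ge m$ for $\sigma\ge\tau^2$) produces, at the origin,
\[
u(t)(0)\ge A_k\,m^{p^k}\,t^{-N/2}\Big(\log\tfrac{t}{\tau^2}\Big)^{\gamma_k},\qquad\gamma_k=\frac{p^k-1}{p-1},
\]
where the recursion $A_{k+1}=c\,A_k^p/(p\gamma_k+1)$ forces $A_k\approx B^{p^k}$ for a constant $B=B(N,p)$. Writing $\theta:=\log(t/\tau^2)$ and factoring out the exponent $p^k$, the right-hand side is comparable to $\theta^{-N/2}\big(Bm\,\theta^{N/2}\big)^{p^k}$, since $1/(p-1)=N/2$.

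Because $\|u(t)\|_\infty<\infty$, this lower bound cannot diverge as $k\to\infty$, which forces the base to satisfy $Bm\,\theta^{N/2}\le1$, i.e.\ $m\le B^{-1}\theta^{-N/2}$. Choosing $t$ close to $T$ and using $\theta=\log(t/\tau^2)\approx2|\log\tau|$ for small $\tau$, this is exactly $M(\tau)\le\gamma_0|\log\tau|^{-N/2}$ with $\gamma_0=B^{-1}2^{-N/2}$ depending only on $N$ and $p$; the remaining range of $\tau$ bounded away from $0$ is trivial, as there the right-hand side is comparable to $\|\phi\|_1$. I expect the main difficulty to be the bookkeeping of the infinite iteration: one must verify that the constants $A_k$ and the corrections from replacing $\log(t/2\tau^2)$ by $\log(t/\tau^2)$ at each step contribute only a factor tending to $1$ in the effective base as $\tau\to0$, so that the \emph{sharp} exponent $N/2$ survives rather than the weaker exponent $1/p=N/(N+2)$ that a single iteration (mere integrability of $\int_0\sigma^{-1}M(\sqrt\sigma)^p\,d\sigma$) would yield. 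This is precisely the content packaged in Baras--Pierre's Proposition~3.2, which one may alternatively invoke to bypass the estimates.
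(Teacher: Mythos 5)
The paper does not actually prove Proposition~\ref{S2P2}: it is presented as ``a variant of'' \cite[Proposition~3.2]{BP85}, with the reader sent to \cite{HI18} for details, so there is no internal proof to compare against. Your iteration argument is the standard modern route to this necessary condition (it is essentially the proof in Hisa--Ishige \cite{HI18}), and it is genuinely different from Baras--Pierre's original method, which is a duality computation with weighted test functions rather than a pointwise lower-bound iteration fed through Duhamel's formula. Your core computation is sound: the seed bound $u(\sigma)\ge c_1\sigma^{-N/2}M(\sqrt{\sigma})$ on $B(\sqrt{\sigma})$, the exact cancellation $\sigma^{N/2}\sigma^{-pN/2}=\sigma^{-1}$ at $p=1+2/N$, the exponents $\gamma_k=(p^k-1)/(p-1)=(p^k-1)N/2$, and the conclusion that finiteness of $\|u(t)\|_{\infty}$ forces $BmL^{N/2}\le 1$ with $L=\log(t/\tau^2)$, which is precisely (\ref{S2P2E0}) because $1/(p-1)=N/2$. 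You are also right that a single pass through the Duhamel term only yields a weaker exponent and that the full iteration is what produces the sharp power $N/2$.

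Two points need repair before this is a complete proof. First, the bookkeeping problem is not quite the one you name: the danger is less the loss of $\log 2$ per step than the shrinking domain of validity, since $a_k(\sigma)$ as you build it is only justified for $\sigma\ge 2^k\tau^2$, which becomes vacuous as $k\to\infty$. The fix is to state the induction hypothesis on a fixed domain, say $\{256\,\tau^2\le t<T,\ |x|\le\sqrt{t}\}$, and check that the induction step preserves it; the losses from replacing $\log(t/(2\tau^2))$ by $\log(t/\tau^2)$ and from truncating the $\sigma$-integral near $\tau^2$ then contribute a factor of the form $\beta^{p^k}$ with a fixed $\beta\in(0,1)$ --- not a factor tending to $1$ --- which is harmless because it is absorbed into the base $B$. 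Second, your dismissal of the range of $\tau$ bounded away from $0$ is wrong as justified: the right-hand side of (\ref{S2P2E0}) is a universal constant there, not something comparable to $\|\phi\|_1$, so the inequality is not trivial in that range. What saves the argument is that, once the domain is fixed as above, it covers all $\tau<\sqrt{T}/16$, and for $T<1/256$ this already exhausts $(0,T)$; for larger $T$ one applies the result on a small subinterval $(0,T')$, and the residual range reflects a looseness in the proposition's own formulation (compare the scale-invariant version $\int_{B(\tau)}\phi\,dx\le\gamma\,[\log(e+\sqrt{T}/\tau)]^{-N/2}$ in \cite{HI18}) rather than a defect of your method.
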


\begin{lemma}\label{S2L2}
Let $q\ge 0$ be fixed, and let
\begin{equation}\label{S2L2E0}
X_{q,\rho}:=\left\{\phi\in L^1(\RN)\ \left|\ \int_{\RN}|\phi|\left[\log(\rho+|\phi|)\right]^qdx<\infty\right.\right\}.
\end{equation}
Then, $\phi\in X_{q,\rho}$ for all $\rho>1$ if and only if $\phi\in X_{q,\sigma}$ for some $\sigma>1$.
\end{lemma}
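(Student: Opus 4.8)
The plan is to show that the defining integrals for $X_{q,\rho}$ and $X_{q,\sigma}$ are comparable for any two parameters $\rho,\sigma>1$, so that finiteness of one forces finiteness of the other. The ``only if'' direction is immediate: if $\phi\in X_{q,\rho}$ for every $\rho>1$, then in particular $\phi\in X_{q,\sigma}$ for any single $\sigma>1$. The content is therefore the converse, and the whole matter reduces to a pointwise comparison of the weights $t\mapsto[\log(\rho+t)]^q$ that is independent of $\phi$.

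First I would fix $\rho,\sigma>1$ and establish that there exist constants $0<c\le C<\infty$, depending only on $\rho$, $\sigma$ and $q$, such that
\[
c\,[\log(\sigma+t)]^q\le[\log(\rho+t)]^q\le C\,[\log(\sigma+t)]^q\qquad\textrm{for all}\ t\ge 0.
\]
To see this, set $\Phi(t):=\log(\rho+t)/\log(\sigma+t)$. This is continuous and strictly positive on $[0,\infty)$ because $\rho,\sigma>1$ force $\log(\rho+t)\ge\log\rho>0$ and $\log(\sigma+t)\ge\log\sigma>0$. Moreover $\Phi(t)\to 1$ as $t\to\infty$, since $\log(\rho+t)-\log(\sigma+t)=\log\frac{\rho+t}{\sigma+t}\to 0$ while $\log(\sigma+t)\to\infty$. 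Hence $\Phi$ is bounded above and below by positive constants on all of $[0,\infty)$, and raising to the power $q\ge 0$ preserves the two-sided bound, giving the displayed inequalities.

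Then I would multiply through by $|\phi(x)|\ge 0$ and integrate over $\RN$ to obtain
\[
c\int_{\RN}|\phi|\,[\log(\sigma+|\phi|)]^q dx\le\int_{\RN}|\phi|\,[\log(\rho+|\phi|)]^q dx\le C\int_{\RN}|\phi|\,[\log(\sigma+|\phi|)]^q dx.
\]
Consequently the middle integral is finite precisely when the outer ones are, i.e. $\phi\in X_{q,\rho}$ if and only if $\phi\in X_{q,\sigma}$. Since $\rho,\sigma>1$ were arbitrary, membership in $X_{q,\rho}$ is independent of the choice of $\rho>1$, which delivers both implications at once.

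There is no real obstacle here; the only step requiring a little care is the uniform two-sided bound on the ratio of logarithms. The cleanest way to secure it is the limit argument above; alternatively one may split into a bounded part $t\in[0,M]$, where both weights are trapped between positive constants, and a tail $t>M$, where $\Phi(t)$ is within, say, $1/2$ of its limit $1$. Either route works, and crucially neither depends on $\phi$, so the constants $c$ and $C$ are universal once $\rho$, $\sigma$ and $q$ are fixed.
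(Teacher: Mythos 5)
Your proposal is correct and follows essentially the same route as the paper: the paper also reduces the statement to boundedness of the ratio $\xi(s)=\log(\rho+s)/\log(\sigma+s)$ on $[0,\infty)$, establishing $\xi(s)\to 1$ (via L'Hospital rather than your subtraction argument) and boundedness on compacts, and then integrates the resulting pointwise comparison against $|\phi|$. Your two-sided bound is a harmless strengthening of the one-sided inequality the paper uses; there is nothing to correct.
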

\begin{proof}
We consider only the case $q>0$.
It is enough to show that $\phi\in X_{q,\rho}$ for all $\rho>1$ if $\phi\in X_{q,\sigma}$ for some $\sigma>1$.
Let $\rho>1$ be fixed, and let $\xi(s):=\log(\rho+s)/(\log(\sigma+s))$.
By L'Hospital's rule we see that $\lim_{s\to\infty}\xi(s)=\lim_{s\to\infty}(s+\sigma)/(s+\rho)=1$.
Since $\xi(s)$ is bounded on each compact interval in $[0,\infty)$, we see that $\xi(s)$ is bounded in $[0,\infty)$, and hence there is $C>0$ such that $\log (\rho+s)\le C\log (\sigma+s)$ for $s\ge 0$.
This inequality indicates that $\phi\in X_{q,\rho}$ if $\phi\in X_{q,\sigma}$.
\end{proof}
Because of Lemma~\ref{S2L1}, we do not care about $\rho>1$ in (\ref{S2L2E0}).
In particular, if $\phi\in X_q$, then $\left\|g(\phi)\right\|_1<\infty$ for every $\rho>1$.

\begin{proposition}\label{S2P1}
(i) Let $N\ge 1$ and $1\le\alpha\le\beta\le\infty$.
There is $C>0$ such that, for $\phi\in L^{\alpha}(\RN)$,
\[
\left\|S(t)\phi\right\|_{\beta}
\le {C}{t^{-\frac{N}{2}\left(\frac{1}{\alpha}-\frac{1}{\beta}\right)}}
\left\|\phi\right\|_{\alpha}
\ \ \textrm{for}\ \ t>0.
\]
(ii) Let $N\ge 1$ and $1\le\alpha<\beta\le\infty$.
Then, for each $\phi\in L^{\alpha}(\RN)$ and $C_0>0$, there is $t_0=t_0(C_0,\phi)$ such that
\[
\left\|S(t)\phi\right\|_{\beta}\le C_0t^{-\frac{N}{2}\left(\frac{1}{\alpha}-\frac{1}{\beta}\right)}
\ \ \textrm{for}\ \ 0<t<t_0.
\]
\end{proposition}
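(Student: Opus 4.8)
The plan is to treat both parts as standard smoothing estimates for the convolution $S(t)\phi=G_t*\phi$.

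For part (i), I would apply Young's convolution inequality. Choose $r\in[1,\infty]$ by the relation $1+\frac{1}{\beta}=\frac{1}{\alpha}+\frac{1}{r}$, so that $\|S(t)\phi\|_{\beta}=\|G_t*\phi\|_{\beta}\le\|G_t\|_{r}\|\phi\|_{\alpha}$. The only genuine computation is the explicit evaluation of $\|G_t\|_{r}$. Using the Gaussian integral $\int_{\RN}\exp(-a|y|^2)\,dy=(\pi/a)^{N/2}$ with $a=r/(4t)$, one finds $\|G_t\|_{r}=c_{N,r}\,t^{-\frac{N}{2}(1-1/r)}$ for a constant $c_{N,r}>0$ depending only on $N$ and $r$. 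Since $1-\frac{1}{r}=\frac{1}{\alpha}-\frac{1}{\beta}$ by the choice of $r$, this yields exactly the claimed power of $t$, and the constant $C=c_{N,r}$ depends only on $N,\alpha,\beta$. The endpoint $\alpha=\beta$ corresponds to $r=1$ with $\|G_t\|_{1}=1$ (giving the $L^{\alpha}$-contraction $\|S(t)\phi\|_{\alpha}\le\|\phi\|_{\alpha}$), while $\beta=\infty$ corresponds to $r=\alpha'$, where one may equivalently argue by H\"{o}lder's inequality $|S(t)\phi(x)|\le\|G_t\|_{\alpha'}\|\phi\|_{\alpha}$; both are covered by the same computation.

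For part (ii), the point is that when $\alpha<\beta$ the critically normalized quantity $t^{\frac{N}{2}(1/\alpha-1/\beta)}\|S(t)\phi\|_{\beta}$ tends to $0$ as $t\to0^{+}$ for a fixed $\phi$. I would exploit the density of $L^{\alpha}(\RN)\cap L^{\beta}(\RN)$ in $L^{\alpha}(\RN)$. Given $C_0>0$, first choose $\psi\in L^{\alpha}(\RN)\cap L^{\beta}(\RN)$ with $\|\phi-\psi\|_{\alpha}$ so small that, by part (i), $t^{\frac{N}{2}(1/\alpha-1/\beta)}\|S(t)(\phi-\psi)\|_{\beta}\le C\|\phi-\psi\|_{\alpha}<C_0/2$ for all $t>0$. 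For the remaining term, the contraction property (part (i) with exponent $\beta$) gives $\|S(t)\psi\|_{\beta}\le\|\psi\|_{\beta}$, so $t^{\frac{N}{2}(1/\alpha-1/\beta)}\|S(t)\psi\|_{\beta}\le t^{\frac{N}{2}(1/\alpha-1/\beta)}\|\psi\|_{\beta}$, which tends to $0$ as $t\to0^{+}$ because the exponent $\frac{N}{2}(1/\alpha-1/\beta)$ is strictly positive. Choosing $t_0$ so that this second term is below $C_0/2$ on $(0,t_0)$ and combining the two bounds by the triangle inequality yields the claim.

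There is no serious obstacle, as both estimates are classical. The only points requiring care are the bookkeeping of the Young/H\"{o}lder exponents and the Gaussian constant in part (i), and, in part (ii), ensuring the density argument covers the endpoint $\beta=\infty$ (where $L^{\alpha}(\RN)\cap L^{\infty}(\RN)$ remains dense in $L^{\alpha}(\RN)$); the strict positivity of the exponent $\frac{N}{2}(1/\alpha-1/\beta)$, guaranteed by $\alpha<\beta$, is precisely what forces the smooth part to vanish.
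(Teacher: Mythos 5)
Your proof is correct: the paper itself gives no argument for this proposition, only citations to \cite[Proposition~48.4]{QS07} for (i) and \cite[Lemma~8]{BC96} for (ii), and your two steps --- Young's inequality with the explicit computation $\left\|G_t\right\|_r=c_{N,r}t^{-\frac{N}{2}(1-\frac{1}{r})}$ for (i), and the density of $L^{\alpha}(\RN)\cap L^{\beta}(\RN)$ in $L^{\alpha}(\RN)$ combined with the contraction property for (ii) --- are exactly the standard arguments those references use. The points you flag (the endpoint $\beta=\infty$ and the strict positivity of the exponent when $\alpha<\beta$) are handled correctly, so there is nothing to add.
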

For Proposition~\ref{S2P1}~(i) (resp. (ii)), see \cite[Proposition~48.4]{QS07} (resp. \cite[Lemma~8]{BC96}).
Note that $C_0>0$ in (ii) can be chosen arbitrary small.

We collect various properties of $g$ defined by (\ref{g}).
\begin{lemma}\label{S2L3}
Let $q>0$ and let $g_1(s):=s[\log(\rho+s)]^{-q}$.
Then the following hold:\\
(i) If $\rho>1$, then $g'(s)>0$ for $s>0$.\\
(ii) If $\rho\ge e$, then $g''(s)>0$ for $s>0$.\\
(iii) If $\rho\ge e$, then $g_1(s)\le g^{-1}(s)$ for $s\ge 0$.\\
(iv) If $\rho>1$, then there is $C_1>0$ such that $g^{-1}(s)\le g_1(C_1s)$ for $s\ge 0$.\\
(v) If $\rho>e^{q/(p-1)}$, then $g^{-1}(s)^p/s$ is nondecreasing for $s\ge 0$.\\
(vi) If $\rho\ge e$, then, for $\phi\in L^1(\RN)$,
\[
S(t)\phi\le g^{-1}(S(t)g(\phi))\ \ \textrm{for}\ \ t\ge 0.
\]
\end{lemma}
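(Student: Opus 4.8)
The plan is to treat (i)--(v) by elementary calculus, exploiting that $g$ is strictly increasing (so that $g^{-1}$ is a well-defined increasing bijection of $[0,\infty)$), and to treat (vi) by Jensen's inequality. Throughout I would write $L=L(s):=\log(\rho+s)$, so that $g(s)=sL^q$, $L'=1/(\rho+s)$ and $L''=-(L')^2$. For (i) and (ii) I simply differentiate and factor. A direct computation gives
\[
g'(s)=L^{q-1}\Big(L+\frac{qs}{\rho+s}\Big),
\]
and since $\rho>1$ forces $L=\log(\rho+s)>0$ for $s\ge 0$, both factors are positive, which is (i). Differentiating once more and factoring out $qL^{q-2}/(\rho+s)$ yields $g''(s)=\frac{qL^{q-2}}{\rho+s}\,B(s)$ with $B(s):=2L+\frac{s}{\rho+s}\big(q-1-L\big)$, so the sign of $g''$ is that of $B$. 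If $q-1-L\ge 0$ then $B\ge 2L>0$; if $q-1-L<0$, then since $0\le \frac{s}{\rho+s}<1$ one has $\frac{s}{\rho+s}(q-1-L)>q-1-L$, whence $B>L+q-1\ge q>0$ because $\rho\ge e$ gives $L\ge 1$. In either case $g''>0$, proving (ii).

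For (iii) and (iv) the idea is to compare $g_1$ and $g^{-1}$ by composing with the increasing function $g$: the inequality $g_1(s)\le g^{-1}(s)$ is equivalent to $g(g_1(s))\le s$, and $g^{-1}(s)\le g_1(C_1s)$ is equivalent to $s\le g(g_1(C_1s))$. A one-line computation gives the useful identity
\[
g(g_1(w))=w\Big(\frac{\log(\rho+g_1(w))}{\log(\rho+w)}\Big)^{q}=:w\,R(w)^{q}.
\]
For (iii), when $\rho\ge e$ one has $L\ge 1$, hence $g_1(w)=wL^{-q}\le w$, so $R(w)\le 1$ and $g(g_1(w))\le w$, which is exactly what is needed. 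For (iv) I would observe that $R$ is continuous and strictly positive on $(0,\infty)$, with $R(w)\to 1$ both as $w\to 0^+$ (since $g_1(w)\to 0$) and as $w\to\infty$ (since $\log(\rho+g_1(w))=\log w-q\log\log(\rho+w)+o(1)\sim\log(\rho+w)$); hence $m:=\inf_{w>0}R(w)>0$. Taking $C_1:=m^{-q}$ gives $g(g_1(w))=wR(w)^{q}\ge wm^{q}=w/C_1$, and setting $w=C_1s$ yields $s\le g(g_1(C_1s))$, which is (iv). I expect the production of the uniform constant $C_1$ to be the main point: unlike the other parts it is not a pointwise algebraic inequality but a compactness statement about $R$, and it rests on the matching logarithmic asymptotics of the numerator and denominator at both ends of $(0,\infty)$.

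For (v), since $g^{-1}$ is increasing and $g^{-1}(s)^p/s=\Phi(g^{-1}(s))$ with $\Phi(t):=t^{p-1}[\log(\rho+t)]^{-q}$, it suffices to show $\Phi$ is nondecreasing. For $t>0$,
\[
\frac{d}{dt}\log\Phi(t)=\frac{p-1}{t}-\frac{q}{(\rho+t)\log(\rho+t)},
\]
which is positive exactly when $(p-1)(\rho+t)\log(\rho+t)>qt$; using $\rho+t>t$ and $\log(\rho+t)\ge\log\rho>q/(p-1)$ (the hypothesis $\rho>e^{q/(p-1)}$) gives $(p-1)(\rho+t)\log(\rho+t)>(p-1)\,t\cdot\frac{q}{p-1}=qt$. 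Hence $\Phi'>0$ on $(0,\infty)$ and $\Phi(0)=0$, so $\Phi$, and therefore $g^{-1}(s)^p/s$, is nondecreasing.

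Finally, (vi) is Jensen's inequality. Since $\int_{\RN}G_t(x-y)\,dy=1$ and $G_t\ge 0$, the operator $S(t)$ is an average against a probability density, so the convexity of $g$ furnished by (ii) gives $g(S(t)\phi)\le S(t)g(\phi)$ pointwise; applying the increasing function $g^{-1}$ yields $S(t)\phi\le g^{-1}(S(t)g(\phi))$. This step is conceptually immediate once (ii) is available, the only input being convexity of $g$ on $[0,\infty)$, which is exactly the range relevant to the nonnegative data to which the estimate is applied in the construction.
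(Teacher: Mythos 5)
Your proof is correct and follows essentially the same route as the paper's: direct differentiation and factoring for (i), (ii) and (v), the identity $g(g_1(w))=w\bigl(\log(\rho+g_1(w))/\log(\rho+w)\bigr)^q$ together with the limit of that ratio being $1$ at infinity (the paper uses L'H\^opital where you use asymptotics) for (iii) and (iv), and Jensen's inequality via the convexity from (ii) for (vi). No gaps.
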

\begin{proof}
By direct calculation we have
\begin{align*}
g'(s)&=[\log(\rho+s)]^{q-1}\left\{\log (\rho+s)+\frac{q s}{s+\rho}\right\},\\
g''(s)&=\frac{q[\log(s+\rho)]^{q-2}}{(s+\rho)^2}\left[s\left\{\log(\rho+s)+q-1\right\}+2\rho\log(\rho+s)\right].
\end{align*}
Thus, (i) and (ii) hold.\\
(iii) Since $\rho\ge e$, we have
\begin{equation}\label{S3L3E1}
g(g_1(s))=\frac{s}{[\log(\rho+s)]^q}\left[\log\left(\rho+\frac{s}{[\log(\rho+s)]^q}\right)\right]^q
\le\frac{s}{[\log(\rho+s)]^q}[\log(\rho+s)]^q=s
\end{equation}
for $s\ge 0$.
By (i) we see that $g^{-1}(s)$ exists and it is increasing.
By (\ref{S3L3E1}) we see that $g_1(s)\le g^{-1}(s)$ for $s\ge 0$.\\
(iv) Let $\xi(s):=(g(g_1(s))/s)^{1/q}=\log(\rho+\frac{s}{[\log(\rho+s)]^q})/(\log(\rho+s))$.
Then, for each compact interval $I\subset [0,\infty)$, there is $c>0$ such that $\xi(s)>c$ for $s\in I$.
By L'Hospital's rule we have
\[
\lim_{s\to\infty}\xi(s)=\lim_{s\to\infty}
\frac{1+\frac{\rho}{s}}{1+\frac{\rho}{s}[\log(\rho+s)]^q}\left\{1-\frac{1}{1+\frac{\rho}{s}}\frac{q}{\log(\rho+s)}\right\}=1,
\]
and hence there is $c_0>0$ such that $\xi(s)\ge c_0$ for $s\ge 0$.
Thus, $g^{-1}(c_0^qs)\le g_1(s)$ for $s\ge 0$.
Then, the conclusion holds.\\
(v) By (i) we see that $g(\tau)$ is increasing.
Let $s:=g(\tau)$.
Then, ${g^{-1}(s)^p}/{s}=\tau^{p-1}\left[\log(\rho+\tau)\right]^{-q}$.
Since $\rho>e^{q/(p-1)}$, we have
\[
\frac{d}{d\tau}\frac{\tau^{p-1}}{[\log(\rho+\tau)]^q}=
\frac{\tau^{p-2}}{[\log(\rho+\tau)]^{q+1}}\left\{(p-1)\log(\rho+\tau)-\frac{q\tau}{\rho+\tau}\right\}>0.
\]
Thus, $g^{-1}(s)^p/s$ is increasing for $s\ge 0$.\\
(vi) Because of (ii), $g$ is convex.
By Jensen's inequality we see that $g(S(t)\phi)\le S(t)g(\phi)$.
Since $g^{-1}$ exists and $g^{-1}$ is increasing, the conclusion holds.
The proof is complete.
\end{proof}

\section{Existence}
\begin{lemma}\label{S3L1}
Let $N\ge 1$ and $p=1+2/N$.
Assume that $\phi\ge 0$.
If $\phi\in X_q$ for some $q\ge N/2$, then (\ref{S1E1}) has a local-in-time nonnegative mild solution $u(t)$, and $\left\|u(t)\right\|_{\infty}\le Ct^{-N/2}(-\log t)^{-q}$ for small $t>0$.
\end{lemma}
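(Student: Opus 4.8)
The plan is to invoke the monotone method of Lemma~\ref{S2L1}: since $f(u)=u^p$ is continuous, nondecreasing and $f(0)=0$ on $[0,\infty)$, it suffices to exhibit a single nonnegative supersolution $\bu$ on a short interval $(0,T)$; Lemma~\ref{S2L1} then produces an integral solution $u$ with $0\le u(t)\le\bu(t)$. Fix $\rho>e^{q/(p-1)}$ so that all conclusions of Lemma~\ref{S2L3} are available, and set $W(t):=S(t)g(\phi)$ with $g$ as in (\ref{g}). Because $\phi\in X_q$ we have $g(\phi)\in L^1(\RN)$, so $W(t)$ is well defined, lies in $L^\infty(\RN)$ for each $t>0$, and satisfies $\|W(t)\|_1\le\|g(\phi)\|_1$. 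The candidate supersolution I would try is
\[
\bu(t):=C_*\,g^{-1}(W(t)),
\]
for a constant $C_*>1$ (say $C_*=2$) to be fixed at the end.

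To verify $\calF[\bu](t)\le\bu(t)$, write $\calF[\bu](t)=S(t)\phi+\int_0^tS(t-s)f(\bu(s))\,ds$. The linear part is handled by Lemma~\ref{S2L3}~(vi), which gives $S(t)\phi\le g^{-1}(W(t))$. For the Duhamel part, set $m(s):=\|W(s)\|_\infty$; since $W(s)\le m(s)$ pointwise and $\sigma\mapsto g^{-1}(\sigma)^p/\sigma$ is nondecreasing by Lemma~\ref{S2L3}~(v), we get $f(\bu(s))=C_*^p g^{-1}(W(s))^p\le C_*^p\frac{g^{-1}(m(s))^p}{m(s)}W(s)$. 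The prefactor depends only on $s$, so applying the (order-preserving) operator $S(t-s)$ and using the semigroup identity $S(t-s)W(s)=W(t)$ collapses the integral to
\[
\int_0^tS(t-s)f(\bu(s))\,ds\le C_*^p\,W(t)\int_0^t\frac{g^{-1}(m(s))^p}{m(s)}\,ds=:C_*^p\,W(t)\,I(t).
\]
To estimate $I(t)$ I would use Proposition~\ref{S2P1}~(ii) (with $\alpha=1$, $\beta=\infty$) to get $m(s)\le C_0s^{-N/2}$ on $(0,T)$ with $C_0$ as small as desired, together with $g^{-1}(\sigma)\le g_1(C_1\sigma)$ from Lemma~\ref{S2L3}~(iv). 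Since $N(p-1)/2=1$, this yields $g^{-1}(m(s))^p/m(s)\le C's^{-1}|\log s|^{-pq}$ for small $s$, and the substitution $r=|\log s|$ gives $I(t)\le C''|\log t|^{1-pq}$, the integral converging because $q\ge N/2>1/p$ forces $pq>1$.

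It then remains to close $g^{-1}(W(t))+C_*^pW(t)I(t)\le C_*g^{-1}(W(t))$, i.e. $C_*^pI(t)\le(C_*-1)g^{-1}(W(t))/W(t)$. As $g$ is convex with $g(0)=0$, the ratio $g^{-1}(w)/w$ is nonincreasing in $w$, hence $g^{-1}(W(t))/W(t)\ge g^{-1}(m(t))/m(t)\ge[\log(\rho+m(t))]^{-q}\ge c\,|\log t|^{-q}$ by Lemma~\ref{S2L3}~(iii) and $m(t)\le C_0t^{-N/2}$. The required inequality thus reduces to $|\log t|^{1-pq}\le C|\log t|^{-q}$, i.e. to $q(p-1)\ge1$, which is exactly $q\ge N/2$. \emph{This matching of logarithmic powers at the borderline $q=N/2$ is the crux of the argument}: there both sides carry the same power $|\log t|^{-q}$, and the inequality is closed purely at the level of constants by first taking $C_0$ small in Proposition~\ref{S2P1}~(ii) and then shrinking $T$; for $q>N/2$ the exponent is strictly negative and the term vanishes as $t\to0$. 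Hence $\bu$ is a supersolution, Lemma~\ref{S2L1} yields a nonnegative integral solution $u$ with $0\le u(t)\le\bu(t)$, and the bound $u(t)\le C_*g^{-1}(\|W(t)\|_\infty)$ combined with Lemma~\ref{S2L3}~(iv) and $\|W(t)\|_\infty\le C_0t^{-N/2}$ gives $\|u(t)\|_\infty\le Ct^{-N/2}(-\log t)^{-q}$.

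Finally, to upgrade $u$ to a mild solution I would check $u\in C([0,T),L^1(\RN))$. Continuity on $(0,T)$ is routine from the integral equation and the $L^\infty$ bound; the delicate point is $t=0$, where $\|u(t)-\phi\|_1\le\|S(t)\phi-\phi\|_1+\int_0^t\|f(u(s))\|_1\,ds$. The first term tends to $0$ by strong $L^1$-continuity of the heat semigroup, and for the second I would reuse the estimate above: by Lemma~\ref{S2L3}~(v), $\|f(u(s))\|_1\le C_*^p\int_{\RN}g^{-1}(W(s))^p\,dx\le C_*^p\|g(\phi)\|_1\,g^{-1}(m(s))^p/m(s)$, so that $\int_0^t\|f(u(s))\|_1\,ds\le C_*^p\|g(\phi)\|_1\,I(t)\to0$ as $t\to0$. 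This yields continuity at $t=0$ and completes the proof.
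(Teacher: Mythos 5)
Your proposal is correct and follows essentially the same route as the paper: the same supersolution $\bu(t)=2\,g^{-1}\bigl(S(t)g(\phi)\bigr)$, the same use of Lemma~\ref{S2L1}, Lemma~\ref{S2L3}~(iii)--(vi) and Proposition~\ref{S2P1}~(ii), and the same observation that at the borderline $q=N/2$ the powers of $-\log t$ exactly cancel so that the fixed-point inequality is closed by taking $C_0$ small. The only nitpick is your choice of $\rho$: you should take $\rho\ge\max\{e,\,e^{q/(p-1)}\}$ rather than just $\rho>e^{q/(p-1)}$, since for $N=1$ one can have $e^{q/(p-1)}<e$ while parts (ii), (iii) and (vi) of Lemma~\ref{S2L3} require $\rho\ge e$.
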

\begin{proof}
First, we consider the case $q=N/2$.
Let $\rho\ge\max\{e^{q/(p-1)},e\}$ be fixed.
Let $g$ be defined by (\ref{g}).
Here, $q=N/2$ and $g$ satisfies Lemma~\ref{S2L3}.
We define
\[
\bu(t):=2g^{-1}(S(t)g(\phi)).
\]
We show that $\bu$ is a supersolution.
By Lemma~\ref{S2L3}~(vi) we have
\begin{equation}\label{S3L1E1}
S(t)\phi\le g^{-1}\left(S(t)g(\phi)\right)=\frac{\bu(t)}{2}.
\end{equation}
Next, we have
\begin{align}
\int_0^tS(t-s)f(\bu(s))ds
&=2^p\int_0^tS(t-s)\left[S(s)g(\phi)\frac{g^{-1}\left(S(s)g(\phi)\right)^p}{S(s)g(\phi)}\right]ds\nonumber\\
&\le 2^pS(t)g(\phi)\int_0^t\left\|\frac{g^{-1}\left(S(s)g(\phi)\right)^p}{S(s)g(\phi)}\right\|_{\infty}ds\nonumber\\
&\le 2^pg^{-1}\left(S(t)g(\phi)\right)
\left\|\frac{S(t)g(\phi)}{g^{-1}\left(S(t)g(\phi)\right)}\right\|_{\infty}
\int_0^t\left\|\frac{g^{-1}\left(S(s)g(\phi)\right)^p}{S(s)g(\phi)}\right\|_{\infty}ds.\label{S3L1E2}
\end{align}
Since $g(\phi)\in L^1(\RN)$, by Proposition~\ref{S2P1}~(ii) we have
\begin{equation}\label{S3L1E3}
\left\|S(t)g(\phi)\right\|_{\infty}\le C_0t^{-N/2}.
\end{equation}
By Lemma~\ref{S2L3}~(v) we see that $g^{-1}(u)^p/u$ is nondecreasing for $u\ge 0$.
Using (\ref{S3L1E3}) and Lemma~\ref{S2L3}~(iv), we have
\begin{multline}\label{S3L1E4}
\left\|\frac{g^{-1}\left(S(s)g(\phi)\right)^p}{S(s)g(\phi)}\right\|_{\infty}
\le\frac{g^{-1}\left(\left\|S(s)g(\phi)\right\|_{\infty}\right)^p}{\left\|S(s)g(\phi)\right\|_{\infty}}\\
\le \frac{g^{-1}(C_0s^{-N/2})^p}{C_0s^{-N/2}}
\le\frac{C_1^pC_0^{2/N}}{s\left[\log\left(\rho+C_0C_1s^{-N/2}\right)\right]^{pq}}
\le\frac{C_0^{2/N}C_1'}{s(-\log s)^{pq}}
\end{multline}
for $0<s<s_0(C_0)$, where $C_1'$ is a constant independent of $C_0$.
Using Lemma~\ref{S2L3}~(iii) and (\ref{S3L1E3}), we have
\begin{multline}\label{S3L1E5}
\left\|\frac{S(t)g(\phi)}{g^{-1}\left(S(t)g(\phi)\right)}\right\|_{\infty}
\le\left\|\frac{S(t)g(\phi)}{g_1(S(t)g(\phi))}\right\|_{\infty}
=\left\|\left[\log(\rho+S(t)g(\phi))\right]^q\right\|_{\infty}\\
\le\left[\log(\rho+\left\|S(t)g(\phi)\right\|_{\infty})\right]^q
\le\left[\log(\rho+C_0t^{-N/2})\right]^q
\le C_2'(-\log t)^q
\end{multline}
for $0<t<t_0(C_0)$, where $g_1$ is defined in Lemma~\ref{S2L3} and $C_2'$ is a constant independent of $C_0$.
By (\ref{S3L1E4}) and (\ref{S3L1E5}) we have
\begin{multline}\label{S3L1E6}
\left\|\frac{S(t)g(\phi)}{g^{-1}\left(S(t)g(\phi)\right)}\right\|_{\infty}
\int_0^t\left\|\frac{g^{-1}\left(S(s)g(\phi)\right)^p}{S(s)g(\phi)}\right\|_{\infty}ds
\le C_0^{2/N}C_1'C_2'(-\log t)^q\int_0^t\frac{ds}{s(-\log s)^{pq}}\\
=C_0^{2/N}C_1'C_2'(-\log t)^q\frac{2}{N(-\log t)^q}
=C_0^{2/N}C_1'C_2'\frac{2}{N}
\end{multline}
for $0<t<\min\{s_0(C_0),t_0(C_0)\}$.
By Proposition~\ref{S2P1}~(ii) we can take $C_0>0$ such that $2^{p+1}C_0^{2/N}C_1'C_2'/N<1$.
By (\ref{S3L1E1}), (\ref{S3L1E2}) and (\ref{S3L1E6}) we have
\[
\calF[\bu](t)
=S(t)\phi+\int_0^tS(t-s)f(\bu(s))ds
\le\frac{1}{2}\bu(t)+\frac{1}{2}\bu(t)=\bu(t)
\]
for small $t>0$.
Thus, there is $T>0$ such that $\calF[\bu]\le\bu$ for $0<t<T$, and hence $\bu$ is a supersolution.
By Lemma~\ref{S2L1} we see that there is $T>0$ such that (\ref{S1E1}) has a solution for $0<t<T$, and $u(t)$ is clearly nonnegative.
Moreover,
\begin{equation}\label{S3L1E6+}
0\le u(t)\le\bu(t)=2g^{-1}(S(t)g(\phi))\le Ct^{-\frac{N}{2}}(-\log t)^{-q},
\end{equation}
which is the estimate in the assertion.
We show that $u(t)\in C([0,T),L^1(\RN))$.
Since $\left\|g^{-1}(u)\right\|_1\le C\left\|u\right\|_1$, by (\ref{S3L1E6}) and Proposition~\ref{S2P1}~(i) we have
\begin{multline}\label{S3L1E7-}
\left\| u(t)-S(t)\phi\right\|_1
\le\left\|\int_0^tS(t-s)f(\bu (s))ds\right\|_1
\le C_0^{2/N}C_1'C_2'\frac{2}{N}\left\| g^{-1}(S(t)g(\phi))\right\|_1\\
\le C_0^{2/N}C_1'C_2'\frac{2}{N}C\left\| S(t)g(\phi)\right\|_1
\le C_0^{2/N}C_1'C_2'\frac{2}{N}C'\left\| g(\phi)\right\|_1
\end{multline}
for small $t>0$, where $C'$ is independent of $C_0$.
By Proposition~\ref{S2P1}~(ii) we can take $C_0>0$ arbitrary small, and hence
\[
\left\|u(t)-S(t)\phi\right\|_1
\to 0\ \ \textrm{as}\ \ t\downarrow 0.
\]
Since $S(t)$ is a strongly continuous semigroup on $L^1(\RN)$ (see e.g., \cite[Section 48.2]{QS07}), we have
\begin{equation}\label{S3L1E7}
\left\|u(t)-\phi\right\|_1\le\left\|u(t)-S(t)\phi\right\|_1+\left\|S(t)\phi-\phi\right\|_1
\to 0\ \ \textrm{as}\ \ t\downarrow 0.
\end{equation}
It follows from (\ref{S3L1E2}) and (\ref{S3L1E6}) that $\left\|\int_0^tS(t-s)f(\bu(s))ds\right\|_1<\infty$ for $0<t<T$.
We see that if $0<t<T$, then
\begin{equation}\label{S3L1E8}
\left\|u(t+h)-u(t)\right\|_1\to 0\ \ \textrm{as}\ \ h\to 0.
\end{equation}
By (\ref{S3L1E7}) and (\ref{S3L1E8}) we see that $u(t)\in C([0,T),L^1(\RN))$.
The proof of (i) is complete.

Next, we consider the case $q>N/2$.
The argument is the same until (\ref{S3L1E6}).
We have
\begin{multline}\label{S3L1E9}
\left\|\frac{S(t)g(\phi)}{g^{-1}\left(S(t)g(\phi)\right)}\right\|_{\infty}
\int_0^t\left\|\frac{g^{-1}\left(S(s)g(\phi)\right)^p}{S(s)g(\phi)}\right\|_{\infty}ds
\le C_0^{2/N}C_1'C_2'(-\log t)^q\int_0^t\frac{ds}{s(-\log s)^{pq}}\\
=\frac{C_1^{2/N}C_1'C_2'}{pq-1}(-\log t)^{1-\frac{2q}{N}}
\end{multline}
instead of (\ref{S3L1E6}).
Since the RHS of (\ref{S3L1E9}) goes to $0$ as $t\downarrow 0$, the rest of the proof is almost the same with obvious modifications.
In particular, (\ref{S3L1E6+}) holds even for $q>N/2$.
We omit the details.
\end{proof}

We consider (\ref{S1E2}), where $\phi$ is given in (\ref{S1E1}).
By Lemma~\ref{S3L1} we see that (\ref{S1E2}) has a local-in-time solution which is denoted by $w(t)$.
We consider the sequence $(u_n)_{n=0}^{\infty}$ defined by (\ref{S1E3}).
Then, the following lemma says that $\left\|u_n(t)\right\|_{\infty}$ can be controlled by $w(t)$.
\begin{lemma}\label{S3L2}
Let $u_n$ be as defined by (\ref{S1E3}), and let $w$ be a solution of (\ref{S1E2}) on $(0,T)$.
Then,
\begin{equation}\label{S3L2E0}
-w(t)\le u_n(t)\le w(t)\ \ \textrm{for a.e.}\ x\in\RN\ \textrm{and}\ 0<t<T.
\end{equation}
\end{lemma}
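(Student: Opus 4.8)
The plan is to argue by induction on $n$, using two structural facts: that the nonlinearity $f(u)=|u|^{p-1}u$ is odd and nondecreasing on $\R$ (indeed $f'(u)=p|u|^{p-1}\ge0$ for $p>1$), and that the linear semigroup $S(t)$ is order preserving, being convolution against the strictly positive Gaussian kernel $G_t$, so that $a\le b$ a.e.\ implies $S(t)a\le S(t)b$. Two immediate consequences will be used repeatedly: from $-|\phi|\le\phi\le|\phi|$ we get $-S(t)|\phi|\le S(t)\phi\le S(t)|\phi|$, and from oddness $f(-v)=-f(v)$.

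For the base case $n=0$ the claim is trivial, since $u_0\equiv0$ and $w(t)\ge0$, so that $-w(t)\le 0\le w(t)$. For the inductive step, suppose $-w(t)\le u_{n-1}(t)\le w(t)$ for $0<t<T$. Applying the monotonicity of $f$ separately to the two inequalities $u_{n-1}(s)\le w(s)$ and $-w(s)\le u_{n-1}(s)$, and then using oddness, yields the pointwise sandwich
\[
-f(w(s))=f(-w(s))\le f(u_{n-1}(s))\le f(w(s)),\qquad 0<s<t.
\]
Because $S(t-s)$ is order preserving, this gives $-S(t-s)f(w(s))\le S(t-s)f(u_{n-1}(s))\le S(t-s)f(w(s))$ for each $s$, and integrating in $s$ over $(0,t)$ preserves the inequalities.

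Adding the bound $-S(t)|\phi|\le S(t)\phi\le S(t)|\phi|$ to the integrated inequalities, and recalling that $w$ solves (\ref{S1E2}), i.e.\ $w(t)=S(t)|\phi|+\int_0^tS(t-s)f(w(s))\,ds$, I obtain
\[
-w(t)\le S(t)\phi+\int_0^tS(t-s)f(u_{n-1}(s))\,ds=\calF[u_{n-1}](t)=u_n(t)\le w(t),
\]
which is exactly the desired estimate at level $n$. This closes the induction and establishes (\ref{S3L2E0}).

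Along the way one should check that each $u_n$ is well defined, that is, that the Duhamel integral converges; this is automatic, since $|f(u_{n-1}(s))|\le f(w(s))$ by the sandwich above and $\int_0^tS(t-s)f(w(s))\,ds<\infty$ because $w$ is a genuine solution of (\ref{S1E2}). The only point requiring real care is the combined use of the monotonicity and oddness of $f$ to upgrade the single hypothesis $|u_{n-1}|\le w$ into two-sided control of $f(u_{n-1})$; the positivity of the heat kernel then does the rest, so I do not anticipate any serious obstacle.
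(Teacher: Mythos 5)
Your proof is correct and follows essentially the same route as the paper: an induction using the monotonicity and oddness of $f$, the order-preservation of $S(t)$, and the Duhamel identity for $w$. The only cosmetic difference is that you run the two bounds as a single combined induction, whereas the paper carries out the upper and lower estimates as two separate inductions.
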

\begin{proof}
It is clear from the definitions of $u_0$ and $w(t)$ that
\[
u_0(t)\le w(t)\ \ \textrm{for}\ \ 0<t<T.
\]
We assume that $u_{n-1}(t)\le w(t)$ on $(0,T)$.
Then, we have
\[
w(t)
=S(t)|\phi|+\int_0^tS(t-s)f(w(s))ds\\
\ge S(t)\phi+\int_0^tS(t-s)f(u_{n-1}(s))ds\\
=u_n(t),
\]
and hence $u_n(t)\le w(t)$ for $0<t<T$.
Thus, by induction we see that, for $n\ge 0$,
\begin{equation}\label{S3L2E1}
u_n(t)\le w(t)\ \ \textrm{on}\ \ 0<t<T.
\end{equation}

It is clear that $u_0(t)\ge -w(t)$ for $0<t<T$.
We assume that $u_{n-1}(t)\ge -w(t)$ on $(0,T)$.
Then, we have
\[
u_n(t)
=S(t)\phi+\int_0^tS(t-s)f(u_{n-1}(s))ds
\ge -S(t)|\phi|+\int_0^tS(t-s)f(-w(s))ds
=-w(t),
\]
and hence, $u_n(t)\ge -w(t)$ on $(0,T)$.
Thus, by induction we see that for $n\ge 0$,
\begin{equation}\label{S3L2E2}
-w(t)\le u_n(t)\ \ \textrm{on}\ \ 0<t<T.
\end{equation}
By (\ref{S3L2E1}) and (\ref{S3L2E2}) we see that (\ref{S3L2E0}) holds.
\end{proof}

\begin{proof}[Proof of Theorem~\ref{A}]
(i) Let $(u_n)_{n=0}^{\infty}$ be defined by (\ref{S1E3}).
Using an induction argument with a parabolic regularity theorem, we can show that, for each $n\ge 1$, $u_n\in C^{2,1}(\RN\times (0,T))$ and $u_n$ satisfies the equation 
\[
\partial_tu_n=\Delta u_n+f(u_{n-1})\ \ \textrm{in}\ \ \RN\times (0,T)
\]
in the classical sense.
Let $K$ be an arbitrary compact subset in $\RN\times(0,T)$, and let $K_1$, $K_2$ be two compact sets such that $K\subset K_1\subset K_2\subset\RN\times (0,T)$.
Because of Lemma~\ref{S3L2}, $f(u_{n-1})$ is bounded in $C(K_2)$.
By a parabolic regularity theorem we see that $u_n$ is bounded in $C^{\gamma,\gamma/2}(K_1)$.
Using a parabolic regularity theorem again, we see that $u_{n+1}$ is bounded in $C^{2+\gamma,1+\gamma/2}(K)$.

In the following we use a diagonal argument to obtain a convergent subsequence in $\RN\times (0,T)$.
Let $Q_j:=\overline{\{x\in\RN|\ |x|\le j\}}\times \left[\frac{T}{j+2},\frac{(j+1)T}{j+2}\right]$.
Since $(u_n)_{n=3}^{\infty}$ is bounded in $C^{2,1}(Q_1)$, by Ascoli-Arzer\`{a} theorem there is a subsequence $(u_{1,k})\subset (u_n)$ and $u_1^*\in C(Q_1)$ such that $u_{1,k}\to u_1^*$ in $C(Q_1)$ as $k\to\infty$.
Since $(u_{1,k})_{k=1}^{\infty}$ is bounded in $C^{2,1}(Q_2)$, there is a subsequence $(u_{2,k})\subset (u_{1,n})$ and $u_2^*\in C(Q_2)$ such that $u_{2,k}\to u_2^*$ in $C(Q_2)$ as $k\to\infty$.
Repeating this argument, we have a double sequence $(u_{j,k})$ and a sequence $(u_j^*)$ such that, for each $j\ge 1$, $u_{j,k}\to u_j^*$ in $C(Q_j)$ as $k\to\infty$.
We still denote $u_{n,n}$ by $u_n$, i.e., $u_n:=u_{n,n}$.
It is clear that $u_{j_1}^*\equiv u_{j_2}^*$ in $Q_{j_1}$ if $j_1\le j_2$.
Since $\RN\times (0,T)=\bigcup_{j=1}^{\infty}Q_j$, there is $u^*\in C(\RN\times (0,T))$ such that $u_n\to u^*$ in $C(K)$ as $n\to\infty$ for every compact set $K\subset\RN\times (0,T)$.
In particular,
\begin{equation}\label{L2E1}
u_n\to u^*\ \ \textrm{a.e. in}\ \ \RN\times(0,T).
\end{equation}
Let $w$ be a solution of (\ref{S1E2}).
It follows from Lemma~\ref{S3L2} that $|u_n(x,t)|\le w(x,t)$.
Since
\[
|G_t(x-y)u_n(y,t)|\le |G_t(x-y)w(y,t)|\ \ \textrm{for}\ \ y\in\RN,
\]
and
\[
G_t(x-y)w(y,t)\in L^1_y(\RN),
\]
by the dominated convergence theorem we see that
\begin{equation}\label{L2E2}
\lim_{n\to\infty}S(t)u_n
=\lim_{n\to\infty}\int_{\RN}G_t(s-y)u_n(y,t)dy
=\int_{\RN}G_t(s-y)u^*(y,t)dy=S(t)u^*.
\end{equation}
By (\ref{S3L1E2}) and (\ref{S3L1E6}) we see that if $T>0$ is small, then
\[
\int_0^t\int_{\RN}G_{t-s}(x-y)f(w(y,s))dyds\le Cg^{-1}(S(t)g(\phi))<\infty
\]
for each $(x,t)\in\RN\times (0,T)$,
and hence $G_{t-s}(x-y)f(w(y,s))\in L^1_{(y,s)}(\RN\times (0,T))$.
Since
\[
|G_{t-s}(x-y)f(u_{n-1}(y,s))|\le |G_{t-s}(x-y)f(w(y,s))|\ \ \textrm{for a.e.}\ (y,s)\in\RN\times (0,T)
\]
and
\[
G_{t-s}(x-y)f(w(y,s))\in L^1_{(y,s)}(\RN\times (0,T)),
\]
by the dominated convergence theorem we see that
\begin{multline}\label{L2E3}
\lim_{n\to\infty}\int_0^tS(t-s)f(u_{n-1}(s))ds
=\lim_{n\to\infty}\int_0^t\int_{\RN}G_{t-s}(x-y)f(u_{n-1}(y,s))dyds\\
=\int_0^t\int_{\RN}G_{t-s}(x-y)f(u^*(y,s))dyds
=\int_0^tS(t-s)f(u^*(s))ds.
\end{multline}
Thus, we take a limit of $u_n=\calF[u_{n-1}]$.
By (\ref{L2E1}), (\ref{L2E2}) and (\ref{L2E3}) we see that $u^*(t)=\calF[u^*](t)$ for $0<t<T$.

Since $|u_n|\le w$, we see that $|u^*|\le w$.
Since $|u^*|\le w$ in $\RN\times(0,T)$, by (\ref{S3L1E7-}) and the arbitrariness of $C_0>0$ we have
\[
\left\| u^*(t)-S(t)\phi\right\|_1
=\left\|\int_0^tS(t-s)f(u^*(s))ds\right\|_1
\le\left\|\int_0^tS(t-s)f(w(s))ds\right\|_1
\to 0\ \ \textrm{as}\ \ t\downarrow 0.
\]
Then, $\left\|u^*(t)-\phi\right\|_1\le\left\|u^*(t)-S(t)\phi\right\|_1+\left\|S(t)\phi-\phi\right\|_1\to 0$ as $t\downarrow 0$.
Since $\left\|\int_0^tS(t-s)f(w(s))\right\|_1<\infty$ for $0<t<T$, we can show by a similar way to the proof of Lemma~\ref{S3L1} that $u^*(t)\in C((0,T),L^1(\RN))$.
Thus, $u^*(t)\in C([0,T),L^1(\RN))$, and hence $u^*(t)$ is a mild solution.
Since $|u^*(t)|\le w(t)$, by Lemma~\ref{S3L1} we have (\ref{AE0}).
The proof of (i) is complete.
\end{proof}

\section{Nonexistence}
Let $0\le q<N/2$ be fixed.
Then there is $0<\e<N/2-q$.
We define $\phi_0$ by
\begin{equation}\label{S4E1}
\phi_0(x):=
\begin{cases}
|x|^{-N}\left(-\log|x|\right)^{-\frac{N}{2}-1+\e} & \textrm{if}\ |x|<1/e,\\
0 & \textrm{if}\ |x|\ge 1/e.
\end{cases}
\end{equation}
\begin{lemma}\label{S4L1}
Let $0\le q<N/2$, and let $\phi_0$ be defined by (\ref{S4E1}).
Then the following hold:\\
(i) $\phi_0\in X_q(\subset L^1(\RN))$.\\
(ii) The function $\phi_0$ does not satisfy (\ref{S2P2E0}) for any $T>0$.
%Let $\phi_0$ be given by (\ref{S4E1}).
%Then, (\ref{S2P2E0}) does not hold for any $T>0$.
\end{lemma}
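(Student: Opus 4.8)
The plan is to reduce both assertions to a single one-dimensional integral. Since $\phi_0$ is radial and supported in $\{|x|<1/e\}$, I would pass to polar coordinates and substitute $s=-\log r$ (so $r=e^{-s}$ and $r^{-1}\,dr=-\,ds$), which turns every integral of $\phi_0$ against a radial weight into an integral in $s$ over $[-\log(\text{radius}),\infty)$. Writing $\omega_{N-1}$ for the area of the unit sphere, the point is that the Jacobian factor $r^{N-1}\,dr$ combines with the singular factor $|x|^{-N}=r^{-N}$ to collapse to $r^{-1}\,dr=-\,ds$:
\[
\int_{\RN}\phi_0\,w(|x|)\,dx=\omega_{N-1}\int_0^{1/e}r^{-1}(-\log r)^{-\frac N2-1+\e}\,w(r)\,dr .
\]
This computation is the backbone of the whole lemma.

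For (i), I would first note that for $0<|x|<1/e$ one has $-\log|x|>1$ and the exponent $-\frac N2-1+\e$ is negative (as $\e<N/2$), so $(-\log|x|)^{-\frac N2-1+\e}\le1$ and hence $0\le\phi_0\le|x|^{-N}$ there. Consequently
\[
\log(e+\phi_0)\le\log\bigl((e+1)|x|^{-N}\bigr)=\log(e+1)+N(-\log|x|)\le C(-\log|x|)
\]
on the ball, so $[\log(e+\phi_0)]^q\le C^q(-\log|x|)^q$. Feeding this into the integral defining $X_q$ and applying the reduction above bounds $\int_{\RN}\phi_0[\log(e+\phi_0)]^q\,dx$ by a constant times $\int_1^{\infty}s^{-\frac N2-1+\e+q}\,ds$, which converges exactly when $-\frac N2-1+\e+q<-1$, i.e. $\e+q<N/2$ — precisely the standing assumption $0<\e<N/2-q$. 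Thus $\phi_0\in X_q\subset L^1(\RN)$. The only point needing a little care is the asymptotic $\log(e+\phi_0)\sim N(-\log|x|)$, i.e. checking that the logarithmic correction $(-\frac N2-1+\e)\log(-\log|x|)$ is dominated by the polynomial term $N(-\log|x|)$; the crude upper bound above sidesteps this.

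For (ii), I would compute $\int_{B(\tau)}\phi_0\,dx$ outright for $0<\tau<1/e$. The same substitution gives
\[
\int_{B(\tau)}\phi_0\,dx=\omega_{N-1}\int_{-\log\tau}^{\infty}s^{-\frac N2-1+\e}\,ds=\frac{\omega_{N-1}}{\frac N2-\e}\,(-\log\tau)^{-\frac N2+\e},
\]
the integral converging because $\e<N/2$. Since $-\log\tau>0$ here, $-\log\tau=|\log\tau|$, and therefore
\[
\frac{\int_{B(\tau)}\phi_0\,dx}{|\log\tau|^{-N/2}}=\frac{\omega_{N-1}}{\frac N2-\e}\,|\log\tau|^{\e}\longrightarrow\infty\quad\text{as }\tau\downarrow0,
\]
because $\e>0$. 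Hence for every $\gamma_0>0$ and every $T>0$ there exists $0<\tau<\min\{T,1/e\}$ violating $\int_{B(\tau)}\phi_0\,dx\le\gamma_0|\log\tau|^{-N/2}$; that is, $\phi_0$ fails (\ref{S2P2E0}) for every $T$.

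There is no deep obstacle: the lemma is a matched pair of elementary computations, engineered so that the borderline exponent $-\frac N2-1+\e$ makes the $X_q$-integral converge (the logarithmic weight contributes the extra factor $(-\log|x|)^q$) while forcing the local mass $\int_{B(\tau)}\phi_0$ to decay only like $|\log\tau|^{-N/2+\e}$, strictly slower than the $|\log\tau|^{-N/2}$ allowed by Proposition~\ref{S2P2}. The parameter $\e$ is exactly the slack separating the two thresholds, and the only genuine care needed is the uniform-in-$\tau$ comparison in (ii) and the logarithmic-factor estimate in (i).
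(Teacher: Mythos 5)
Your proposal is correct and follows essentially the same route as the paper: both bound $\log(e+\phi_0)$ by a constant multiple of $-\log|x|$, pass to polar coordinates (the paper keeps the $r$-integral $\int_0^{1/e}r^{-1}(-\log r)^{-(N/2+1-q-\e)}\,dr$ where you substitute $s=-\log r$, which is the same computation), and use the condition $q+\e<N/2$ for convergence; for (ii) both compute $\int_{B(\tau)}\phi_0\,dx\sim C(-\log\tau)^{-N/2+\e}$ and let $\tau\downarrow0$. No gaps.
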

\begin{proof}
(i) We write $\phi_0(r)=r^{-N}\left(-\log r\right)^{-N/2-1+\e}$ for $0<r<1/e$.
Since $\log (e+s)\le 1+\log s$ for $s\ge 0$, we have
\begin{equation}\label{S4L1E1}
\log (e+|\phi_0|)\le 1-N\log r-\left(\frac{N}{2}+1-\e\right)\log(-\log r)\le
-2N\log r
\end{equation}
for $0<r<1/e$.
%We easily see
%\begin{equation}\label{S4L1E2}
%\log\left(e+\frac{1}{r}\right)\ge\log\frac{1}{r}.
%\end{equation}
Let $B(\tau):=\{x\in\RN\ |\ |x|<\tau\}$.
Using (\ref{S4L1E1}), we have
\begin{multline}\label{S4L1E3}
\int_{B(1/e)}|\phi_0|\left[\log(e+|\phi_0|)\right]^qdx
\le\omega_{N-1}\int_0^{1/e}\frac{(2N)^q(-\log r)^qr^{N-1}dr}{r^N(-\log r)^{N/2+1-\e}}\\
\le (2N)^q\omega_{N-1}\int_0^{1/e}\frac{dr}{r\left(-\log r\right)^{N/2+1-q-\e}}
=\frac{(2N)^q\omega_{N-1}}{\frac{N}{2}-q-\e}<\infty,
\end{multline}
where $\omega_{N-1}$ denotes the area of the unit sphere $\mathbb{S}^{N-1}$ in $\RN$.
By (\ref{S4L1E3}) we see that $\phi_0\in X_q$.\\
(ii) Suppose the contrary, i.e., there exists $\gamma_0>0$ such that (\ref{S2P2E0}) holds.
When $0<\tau<1/e$, we have
\[
\int_{B(\tau)}\phi_0(x)dx
=\omega_{N-1}\int_0^{\tau}\frac{dr}{r(-\log r)^{N/2+1-\e}}\\
=\frac{C}{(-\log\tau)^{N/2-\e}},
\]
where $C>0$ is independent of $\tau$.
Then,
\[
\gamma_0
\ge\frac{\int_{B(\tau)}\phi_0(x)dx}{(-\log\tau)^{-N/2}}\ge C(-\log\tau)^{\e}\to\infty
\ \ \textrm{as}\ \ \tau\downarrow 0.
\]
which is a contradiction.
Thus, the conclusion holds.
\end{proof}
\begin{proof}[Proof of Theorem~\ref{A}~(ii)]
Let $0\le q<N/2$.
It follows from Lemma~\ref{S4L1}~(i) that $\phi_0\in X_q$.
By Lemma~\ref{S4L1}~(ii) we see that there does not exist $\gamma_0>0$ such that (\ref{S2P2E0}) holds.
By Proposition~\ref{S2P2} the problem (\ref{S1E1}) with $\phi_0$ has no nonnegative integral solution.
\end{proof}

\section{Uniqueness}
\begin{proof}[Proof of Theorem~\ref{B}]
Let $q>N/2$.
Suppose that (\ref{S1E1}) has two integral solutions $u(t)$ and $v(t)$.
Using Young's inequality and the inequality $\left\|u(t)\right\|_{\infty}\le Ct^{-N/2}(-\log t)^{-q}$, we have
\begin{align*}
\left\|u(t)-v(t)\right\|_1
&\le\int_0^t\left\|G_{t-s}*\left\{\left(p|u|^{p-1}+p|v|^{p-1}\right)(u-v)\right\}\right\|_1ds\\
&\le p\int_0^t\left\|G_{t-s}\right\|_1\left(\left\|u\right\|_{\infty}^{p-1}+\left\|v\right\|^{p-1}_{\infty}\right)ds\sup_{0\le s\le t}\left\|u(s)-v(s)\right\|_1\\
&\le C\int_0^t\frac{ds}{\left\{s^{N/2}(-\log s)^q\right\}^{p-1}}\sup_{0\le s\le t}\left\|u(s)-v(s)\right\|_1.
\end{align*}
Since
\[
\int_0^t{s^{-N(p-1)/2}(-\log s)^{-(p-1)q}}ds=\frac{N(-\log t)^{1-2q/N}}{2q-N}
\]
and $1-2q/N<0$, we can choose $T>0$ such that $C\int_0^t{s^{-N(p-1)/2}(-\log s)^{-(p-1)q}}ds<1/2$ for every $0\le t\le T$.
Then, we have
\[
\sup_{0\le t\le T}\left\|u(t)-v(t)\right\|_1\le \frac{1}{2}\sup_{0\le s\le T}\left\|u(s)-v(s)\right\|_1,
\]
which implies the uniqueness.
\end{proof}

\noindent
{\bf Acknowledgements}\\
The author was supported by JSPS KAKENHI Grant Number 19H01797.


\begin{thebibliography}{99}
\bibitem{BP85}{P.~Baras and M.~Pierre},
{\it Crit\`{e}re d'existence de solutions positives pour des \'{e}quations semi-lin\'{e}aires non monotones},
Ann. Inst. H. Poincar\'{e} Anal. Non Lin\'{e}aire {\bf 2} (1985), 185--212.

\bibitem{BC96}{H.~Brezis and T.~Cazenave},
{\it A nonlinear heat equation with singular initial data},
J. Anal. Math. {\bf 68} (1996), 277--304.

\bibitem{CZ03}{C.~Celik and Z.~Zhou},
{\it No local $L^1$ solution for a nonlinear heat equation},
Comm. Partial Differential Equations {\bf 28} (2003), 1807--1831.

\bibitem{F66}{H.~Fujita},
{\it On the blowing up of solutions of the Cauchy problem for $u_t=\Delta u+u^{1+\alpha}$},
J. Fac. Sci. Univ. Tokyo Sect. I {\bf 13} (1966), 109--124.

\bibitem{HW82}{A.~Haraux and F.~Weissler},
{\it Nonuniqueness for a semilinear initial value problem},
Indiana Univ. Math. J. {\bf 31} (1982), 167--189. 

%\bibitem{K73}{K.~Hayakawa},
%{\it On nonexistence of global solutions of some semilinear parabolic differential equations},
%Proc. Japan Acad. {\bf 49} (1973), 503--505.

\bibitem{HI18}{K.~Hisa and K.~Ishige},
{\it Existence of solutions for a fractional semilinear parabolic equation with singular initial data},
Nonlinear Anal. {\bf 175} (2018), 108--132.

%\bibitem{KST77}{K.~Kobayashi T.~Sirao and H.~Tanaka},
%{\it On the growing up problem for semilinear heat equations},
%J. Math. Soc. Japan {\bf 29} (1977), 407--424.

\bibitem{LRSV16}{R.~Laister, J.~Robinson, M.~Sier\.{z}\c{e}ga and A.~Vidal-L\'{o}pez},
{\it A complete characterisation of local existence for semilinear heat equations in Lebesgue spaces},
Ann. Inst. H. Poincar\'e Anal. Non Lin\'{e}aire {\bf 33} (2016), 1519--1538.

\bibitem{NS85}{W.~Ni and P.~Sacks},
{\it Singular behavior in nonlinear parabolic equations},
Trans. Amer. Math. Soc. {\bf 287} (1985), 657--671.

\bibitem{QS07}{P.~Quittner and P.~Souplet},
{Superlinear parabolic problems. Blow-up, global existence and steady states},
{\it Birkh\"{a}user Advanced Texts: Basler Lehrb\"{u}cher. Birkh\"{a}user Verlag, Basel, 2007. xii+584 pp. ISBN: 978-3-7643-8441-8.}

\bibitem{RS13}{J.~Robinson and M.~Sier\.{z}\c{e}ga},
{\it Supersolutions for a class of semilinear heat equations},
Rev. Mat. Complut. {\bf 26} (2013), 341--360. 

\bibitem{T16}{J. Takahashi},
{\it Solvability of a semilinear parabolic equation with measures as initial data},
Geometric properties for parabolic and elliptic PDE's, Springer Proc. Math. Stat., vol. {\bf 176}, Springer, 2016, ISBN: 978-3-319-41538-3, pp. 257--276.

\bibitem{T02}{E.~Terraneo},
{\it Non-uniqueness for a critical non-linear heat equation},
Comm. Partial Differential Equations {\bf 27} (2002), 185--218.

\bibitem{W80}{F.~Weissler},
{\it Local existence and nonexistence for semilinear parabolic equations in $L^p$},
Indiana Univ. Math. J. {\bf 29} (1980), 79--102. 

\bibitem{W81}{F.~Weissler},
{\it Existence and nonexistence of global solutions for a semilinear heat equation},
Israel J. Math. {\bf 38} (1981), 29--40. 

\end{thebibliography}
\end{document}